\newtheorem{thm}{Theorem}[section]
\newtheorem{cor}{Corollary}[section]
\newtheorem{con}{Conjecture}[section]
\newtheorem{lem}{Lemma}[section]
\theoremstyle{definition}
\newtheorem{rem}{Remark}[section]
\theoremstyle{remark}
\newcommand{\Romannum}[1]{\uppercase\expandafter{\romannumeral #1}}
\numberwithin{equation}{section}
\newcommand\keywordsname{Key words}
\newcommand\AMSname{AMS subject classifications}
\newenvironment{@abssec}[1]{%
     \if@twocolumn
       \section*{#1}%
     \else
       \vspace{.05in}\footnotesize
       \parindent .2in
         {\upshape\bfseries #1. }\ignorespaces
     \fi}
     {\if@twocolumn\else\par\vspace{.1in}\fi}
\begin{document}

\vskip6cm
\title{Notes on the sum of powers of the signless Laplacian eigenvalues of graphs
\footnote{Research supported by National Natural Science Foundation of China
(Nos. 10901061, 11071088), the Zhujiang Technology New Star Foundation of
Guangzhou (No. 2011J2200090), and Program on International Cooperation and Innovation, Department of Education, Guangdong Province (No.
2012gjhz0007).}}
\author{Lihua You \footnote{{\it{Corresponding author:\;}}ylhua@scnu.edu.cn} \qquad
Jieshan Yang  \footnote{jieshanyang1989@163.com}}
\vskip.2cm
\date{{\small
School of Mathematical Sciences, South China Normal University,\\
Guangzhou, 510631, China\\
}} \maketitle

\begin{abstract}

 \vskip.3cm
 For a graph $G$ and a non-zero real number $\alpha$, the graph invariant $S_{\alpha}(G)$ is the sum of the $\alpha^{th}$ power of the non-zero signless Laplacian eigenvalues of $G$. In this paper, we obtain the sharp bounds of $S_{\alpha}(G)$ for a connected bipartite graph $G$ on $n$ vertices and a connected graph $G$ on $n$ vertices having a connectivity less than or equal to $k$, respectively, and propose some open problems for future research.
 \vskip.2cm \noindent{\it{AMS classification:}}  05C50; 05C35; 15A18
  \vskip.2cm \noindent{\it{Keywords:}} signless Laplacian matrix; bipartite graph; connectivity.
\end{abstract}

\section{ Introduction}
\hskip.6cm
Let $G$ be a simple connected graph with vertex set $V(G)=\{v_1, v_2, \ldots, v_n\}$ and $d_i$ be the degree of the vertex $v_i$ for $i\in\{1, 2, \ldots,n\}$. Let $A(G)$ be the adjacent matrix, $D(G)$ be the diagonal matrix of vertex degrees of $G$, respectively. The Laplacian matrix of $G$ is $L(G)=D(G)-A(G)$ and the signless Laplacian matrix of $G$ is $Q(G)=D(G)+A(G)$. It is well known that both $L(G)$ and $Q(G)$ are symmetric and positive semidefinite, then we can denote the eigenvalues of $L(G)$ and $Q(G)$ by $\mu_{1}(G)\geq\mu_{2}(G)\geq\ldots\geq\mu_{n}(G)=0$ and $q_{1}(G)\geq q_{2}(G)\geq\ldots\geq q_{n}(G)\geq 0$. If no confusion, we write $\mu_{i}(G)$ as $\mu_{i}$, and $q_{i}(G)$ as $q_{i}$, respectively. The trace of the matrix $X=(x_{ij})_{n\times n}$ is defined as $tr(X)=\sum\limits_{i=1}^n x_{ii},$ which is also equal to the sum of eigenvalues of $X$. The join of $G$ and $H$, denoted by $G\vee H$, is the graph obtained by starting with a disjoint union of $G$ and $H$ and adding edges joining every vertex of $G$ to every vertex of $H.$

Let $\lambda_1, \lambda_2, \ldots, \lambda_n$ be the eigenvalues of $A(G).$ The famous graph energy $E(G),$ introduced by Gutman \cite{1978}, is defined as
$$E(G)=\sum\limits_{i=1}^{n}|\lambda_i|.$$
This quantity has a long known application in molecular-orbital theory of organic molecules and has been much investigated(see \cite{1992}, \cite{2001}).

\hskip.5cm
In \cite{1993}, Klein and Randi$\acute{c}$ defined the Kirchhoff index as $Kf(G)=\sum\limits_{i<j}r_{ij}$, where $r_{ij}$ is the effective resistance between $v_i$ and $v_j$. It was proved later by Zhu et al.\cite{1996a}, Gutman and Moher \cite{1996b} that
$$Kf(G)=n\sum\limits_{i=1}^{n-1}\frac {1}{\mu_i}.$$
The Kirchhoff index was widely used in electric circuit, probabilistic theory and chemistry(see \cite{1996b}, \cite{2004}, \cite{2008a}). Most of its results can be found in the survey \cite{2010}.

Recently, the so-called Laplacian energy $E_{L}(G)$ \cite{2006} and the Laplacian-energy-like invariant $LEL(G)$ \cite{2008b}  defined respectively as

\hskip4cm $E_L(G)=\sum\limits_{i=1}^n\mu_i^2,$ \quad $LEL(G)=\sum\limits_{i=1}^{n-1}\sqrt{\mu_i}$

\noindent have been investigated. Stevanovi$\acute{c}$ et al. \cite{2009a} showed that the LEL-variant is a well designed molecular descriptor, which has great application in chemistry. For more details on $LEL(G)$, we refer readers to the survey \cite{2011b}.

Motivated by the definition of $LEL(G)$, Jooyandeh et al.\cite{2009b} introduced the incidence energy $IE(G)$ of $G$, which is defined as $$IE(G)=\sum\limits_{i=1}^{n}\sqrt{q_i}.$$
In \cite{2009c}, relations between $IE(G)$ and $LEL(G)$  and several sharp upper bounds for $IE(G)$ are obtianed.

Since the definition of $LEL(G)$ and Kirchhoff index, zhou \cite{2008c} put forward a general form $s_{\alpha}(G)$, i.e., $$s_{\alpha}(G)=\sum\limits_{i=1}^{h}\mu_{i}^{\alpha},$$
where $\alpha$ is a non-zero real number and $h$ is the number of non-zero Laplacian eigenvalues of $G$. Zhou called it the sum of powers of Laplacian eigenvalues of $G$, and achieved some properties and bounds. More results on $s_{\alpha}$ obtained by Zhou can be found in \cite{2009d, 2009e}. In \cite{2011a}, the authors presented several bounds of $s_{\alpha}(G)$ for a connected graph $G$ in terms of its number of vertices and edges, connectivity and chromatic number respectively.

Motivated by the definition of $LEL$, $s_\alpha,$ and $IE$, Liu and Liu \cite{2012} put forward the sum of powers of the signless Laplacian eigenvalues of $G$, denoted by $$S_{\alpha}(G)=\sum\limits_{i=1}^{h}q_{i}^{\alpha},$$
where $\alpha$ is a non-zero real number and $h$ is the number of non-zero signless Laplacian eigenvalues of $G$. Obviously, $S_1(G)=2m,$ $S_{\frac{1}{2}}(G)=IE(G).$ They determined the graphs on $n$ vertices with the first, second, and third largest value of $S_\alpha$ when $\alpha>0$ and presented some bounds for $S_\alpha$ in terms of $\{n, m, Z_\alpha(G)\}$ where $m$ is the number of edges in $G$ and $Z_\alpha(G)=\sum\limits_{i=1}^{n}d_i^\alpha,$ especially in terms of  $\{n, m, Z_2(G)\}$ ($Z_2(G)$ usually written as $M_1(G),$ is called the first Zagreb index). According to the relations between $S_\alpha$ and $\{n, m, Z_2(G)\},$ some bounds for $IE$ are also presented. In \cite{2013}, Oscar Rojo and Eber Lenes derived an upper bound for $IE(G)$ of $G$ on $n$ vertices having a connectivity less than or equal to $k$, and showed that this upper bound is attained if and only if $G=K_k\vee(K_1\cup K_{n-k-1}).$ Moreover, Saieed Akbari et al. \cite{2010b} established some relations between $s_{\alpha}(G)$ and $S_{\alpha}(G)$ when $\alpha$ belongs to different intervals, that is, $S_\alpha(G)\geq s_\alpha(G)$ if $0<\alpha\leq 1$ or $2\leq\alpha\leq3,$ while $S_\alpha(G)\leq s_\alpha(G)$ if $1\leq\alpha\leq2,$ and the equality holds if and only if $G$ is a bipartite graph.

The vertex connectivity(or just connectivity) of a graph $G$, denoted by $\kappa(G)$, is the minimum number of vertices of $G$ whose deletion disconnects $G.$ It is conventional to define $\kappa(K_n)=n-1.$

Let $\mathcal{B}_n$ be the family of the connected bipartite graphs on $n$ vertices, $\mathcal{F}_n$ be the family of the simple connected graphs on $n$ vertices, respectively. Let $$\mathcal{V}_{n}^{k}=\{G\in\mathcal{F}_n| \kappa(G)\leq k\}.$$

In this paper, we will derive a sharp bound of $S_{\alpha}(G)$ in $\mathcal{B}_n$ in section 3, and derive a sharp bound of $S_{\alpha}(G)$ in $\mathcal{V}_{n}^{k}$ in section 4, respectively, and propose some open problems in these sections for future research.

\section{Preliminaries}
\hskip.6cm  In this section, we introduce some basic properties which we need to use in the proofs of our main results.

\begin{lem}\label{lem21}{\rm (\cite{2007})}
Let $G$ be a graph with $n$ vertices and $e$ be an edge of $G$. Then $$0\leq q_n(G-e)\leq q_n(G)\leq q_{n-1}(G-e)\leq q_{n-1}(G)\leq\cdots\leq q_1(G-e)\leq q_1(G).$$
\end{lem}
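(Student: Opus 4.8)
The plan is to realize $Q(G)$ as a rank-one positive semidefinite perturbation of $Q(G-e)$ and then apply the interlacing theorem for such perturbations, which I would derive from the Courant--Fischer min-max characterization so as to keep the argument self-contained. Write $e=uv$. Deleting $e$ decreases $d_u$ and $d_v$ by one and kills the $(u,v)$ and $(v,u)$ entries of the adjacency matrix, so, with $\mathbf{e}_w$ the $w$-th standard basis vector of $\mathbb{R}^n$, one has
$$Q(G)-Q(G-e)=\mathbf{e}_u\mathbf{e}_u^{\top}+\mathbf{e}_v\mathbf{e}_v^{\top}+\mathbf{e}_u\mathbf{e}_v^{\top}+\mathbf{e}_v\mathbf{e}_u^{\top}=(\mathbf{e}_u+\mathbf{e}_v)(\mathbf{e}_u+\mathbf{e}_v)^{\top}.$$
Thus, setting $z=\mathbf{e}_u+\mathbf{e}_v$, we get $Q(G)=Q(G-e)+zz^{\top}$, where $zz^{\top}$ is symmetric, positive semidefinite, and of rank $1$.

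Now put $A=Q(G-e)$ and $B=Q(G)=A+zz^{\top}$, with eigenvalues $q_1(G-e)\ge\cdots\ge q_n(G-e)$ and $q_1(G)\ge\cdots\ge q_n(G)$. Since $x^{\top}zz^{\top}x=(z^{\top}x)^2\ge 0$ for all $x$, the quadratic form of $B$ dominates that of $A$ pointwise, so the min-max formula $q_k=\max_{\dim\mathcal{S}=k}\ \min_{0\ne x\in\mathcal{S}}\frac{x^{\top}Mx}{x^{\top}x}$ immediately gives $q_k(G)\ge q_k(G-e)$ for every $k$. For the complementary inequality $q_{k+1}(G)\le q_k(G-e)$, let $\mathcal{W}$ be the span of the eigenvectors of $A$ associated with $q_k(G-e),\dots,q_n(G-e)$, so $\dim\mathcal{W}=n-k+1$; intersecting with the hyperplane $z^{\perp}$ yields a subspace $\mathcal{W}'$ with $\dim\mathcal{W}'\ge n-k$, and every $x\in\mathcal{W}'$ satisfies $x^{\top}Bx=x^{\top}Ax\le q_k(G-e)\,x^{\top}x$; the dual min-max formula $q_{k+1}(G)=\min_{\dim\mathcal{S}=n-k}\ \max_{0\ne x\in\mathcal{S}}\frac{x^{\top}Bx}{x^{\top}x}$ then gives $q_{k+1}(G)\le q_k(G-e)$.

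Chaining these two families of inequalities produces $q_1(G)\ge q_1(G-e)\ge q_2(G)\ge\cdots\ge q_n(G)\ge q_n(G-e)$, and $q_n(G-e)\ge 0$ because $Q(G-e)$ is positive semidefinite (it is a sum of matrices $(\mathbf{e}_a+\mathbf{e}_b)(\mathbf{e}_a+\mathbf{e}_b)^{\top}$ over the edges $ab$ of $G-e$); reading this chain from right to left is exactly the asserted inequality. The only genuinely delicate point is the dimension bookkeeping in the second inequality: one must verify that intersecting an $(n-k+1)$-dimensional subspace with a hyperplane leaves dimension at least $n-k$, and that on that intersection the quadratic form of $B$ collapses to that of $A$. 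Everything else is routine; alternatively one could simply invoke Weyl's rank-one update inequalities or analyze the secular equation $1+z^{\top}(A-\lambda I)^{-1}z=0$, but the Courant--Fischer route above requires no outside input.
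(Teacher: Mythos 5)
Your proof is correct: the identity $Q(G)=Q(G-e)+(\mathbf{e}_u+\mathbf{e}_v)(\mathbf{e}_u+\mathbf{e}_v)^{\top}$, the Courant--Fischer argument for both families of inequalities (including the dimension count $\dim\mathcal{W}'\geq n-k$), and the positive semidefiniteness of $Q(G-e)$ are all sound. The paper itself gives no proof of this lemma --- it is quoted from Cvetkovi\'c, Rowlinson and Simi\'c --- and your rank-one-perturbation interlacing argument is precisely the standard proof underlying that cited result, so you have in effect supplied the omitted details rather than diverged from the paper.
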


Note that $\sum\limits_{i=1}^{n}q_i(G)-\sum\limits_{i=1}^{n}q_i(G-e)=2.$ By Lemma \ref{lem21}, it immediately follows

\begin{thm}\label{thm21}
Let $e$ be an edge of $G$. Then $S_{\alpha}(G)> S_{\alpha}(G-e)$ for $\alpha>0$, and $S_{\alpha}(G)< S_{\alpha}(G-e)$ for $\alpha<0$.
\end{thm}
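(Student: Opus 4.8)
The plan is to compare the two multisets of signless Laplacian eigenvalues termwise using the interlacing inequalities of Lemma~\ref{lem21}, and then feed the comparison into the strictly increasing (resp.\ decreasing) power function $t\mapsto t^{\alpha}$. Write $q_i=q_i(G)$ and $q_i'=q_i(G-e)$ for $i=1,\dots,n$. Lemma~\ref{lem21} gives $q_i'\le q_i$ for every $i$, together with the shift $q_i\le q_{i-1}'$. Summing the first family of inequalities and using $\sum_i q_i-\sum_i q_i'=\operatorname{tr}Q(G)-\operatorname{tr}Q(G-e)=2$ (the edge $e$ contributes exactly $2$ to the sum of degrees), we see the total ``gap'' $\sum_i(q_i-q_i')$ equals $2>0$, so the inequalities $q_i'\le q_i$ cannot all be equalities; moreover $q_i\le q_{i-1}'$ controls how the extra mass is distributed, keeping each $q_i$ between $q_i'$ and $q_{i-1}'$.

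First I would handle the bookkeeping of which eigenvalues are actually summed, since $S_\alpha$ ranges only over the $h$ \emph{nonzero} eigenvalues and $G$, $G-e$ may have different numbers of zero eigenvalues. The clean way is to note that for $\alpha>0$ we may harmlessly extend each sum to all $n$ indices, because $0^{\alpha}=0$; thus $S_\alpha(G)=\sum_{i=1}^n q_i^{\alpha}$ and $S_\alpha(G-e)=\sum_{i=1}^n (q_i')^{\alpha}$. Then termwise $q_i^{\alpha}\ge (q_i')^{\alpha}$ since $q_i\ge q_i'\ge0$ and $t^{\alpha}$ is nondecreasing on $[0,\infty)$; and since at least one of the inequalities $q_i\ge q_i'$ is strict — say $q_j>q_j'$ — and at that index $q_j>0$ (indeed $q_j\ge q_j'\ge 0$ with $q_j>q_j'$ forces $q_j>0$), strict monotonicity of $t^\alpha$ on $(0,\infty)$ gives $q_j^{\alpha}>(q_j')^{\alpha}$, so the summed inequality is strict: $S_\alpha(G)>S_\alpha(G-e)$.

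For $\alpha<0$ the function $t\mapsto t^{\alpha}$ is strictly decreasing on $(0,\infty)$ and blows up at $0$, so the padding trick must be replaced by care about zeros. Here one argues that since $G$ is obtained from $G-e$ by adding an edge, $\operatorname{rank} Q(G)\ge \operatorname{rank} Q(G-e)$, i.e.\ $G-e$ has at least as many zero signless Laplacian eigenvalues as $G$; equivalently, if $h$ and $h'$ denote the number of nonzero eigenvalues of $G$ and $G-e$ then $h\ge h'$, and the nonzero spectrum of $G-e$ interlaces with that of $G$ via $q_i\ge q_i'>0$ for $i=1,\dots,h'$. For $i\le h'$ we get $q_i^{\alpha}\le (q_i')^{\alpha}$ by decreasing monotonicity, with strict inequality at the index $j\le h'$ where $q_j>q_j'$ (such a $j$ exists among the nonzero indices because the total gap $2$ cannot be absorbed purely by eigenvalues that were already zero in $G-e$), and for the remaining indices $h'<i\le h$ the terms $q_i^{\alpha}$ are positive and only add to $S_\alpha(G)$; summing, $S_\alpha(G)<S_\alpha(G-e)$.

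The only real obstacle is this zero-eigenvalue accounting in the $\alpha<0$ case: one must be sure that the extra trace of $2$ genuinely perturbs a \emph{positive} eigenvalue rather than hiding among eigenvalues that are zero for both graphs, and that no nonzero eigenvalue of $G-e$ degenerates to give a spurious infinite term. Both points follow from Lemma~\ref{lem21} — the chain $q_i\le q_{i-1}'$ prevents a nonzero $q_i'$ from being ``pushed to zero,'' and the positivity of the total gap together with $q_i'\le q_i$ forces strictness at some index where $q_i>0$ — so the argument is essentially immediate once this is spelled out, which is presumably why the authors state it as a corollary of the lemma.
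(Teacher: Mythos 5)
Your argument for $\alpha>0$ is correct and is essentially the paper's own (the paper treats the whole theorem as an immediate consequence of Lemma~\ref{lem21} together with $\sum_i q_i(G)-\sum_i q_i(G-e)=2$): termwise $q_i(G-e)\le q_i(G)$, the trace gap of $2$ forces strictness at some index, and padding the sums with zeros is harmless because $0^{\alpha}=0$.

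The $\alpha<0$ case, however, contains a genuine gap, and it sits exactly at the point you flagged as "the only real obstacle." After comparing the first $h'$ terms you conclude that the remaining terms $q_i^{\alpha}$ for $h'<i\le h$ "are positive and only add to $S_\alpha(G)$; summing, $S_\alpha(G)<S_\alpha(G-e)$." This is backwards: adding positive quantities to $S_\alpha(G)$ pushes the inequality in the wrong direction, so from $\sum_{i\le h'}q_i^{\alpha}\le\sum_{i\le h'}(q_i')^{\alpha}$ you only get $S_\alpha(G)\le S_\alpha(G-e)+\sum_{i>h'}q_i^{\alpha}$, which proves nothing. Moreover the gap cannot be repaired in general, because when $h>h'$ the claimed inequality can fail outright: take $G=K_3$ and $e$ any edge, so $G-e=P_3$; then $\sigma(Q(K_3))=\{4,1,1\}$ and $\sigma(Q(P_3))=\{3,1,0\}$, and for $\alpha=-1$ one has $S_{-1}(K_3)=\frac94>\frac43=S_{-1}(P_3)$. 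The multiplicity of $0$ in the signless Laplacian spectrum equals the number of bipartite components, which can increase under edge deletion, and precisely then the extra nonzero eigenvalues of $G$ (with no counterpart in $G-e$) can overturn the inequality. Your argument is sound for $\alpha<0$ only under the additional hypothesis $h=h'$ (for instance when both $G$ and $G-e$ are connected and bipartite, which is the only way the $\alpha<0$ case is actually invoked in Section~3 of the paper). Note that the paper's one-line justification silently ignores this same subtlety, so what you have really uncovered is that the $\alpha<0$ half of the theorem needs a restriction; but as written your derivation of the stated conclusion is invalid.
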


\begin{lem}\label{lem22}{\rm (\cite{2007b})}
If $G$ is bipartite, then $Q(G)$ and $L(G)$ share the same eigenvalues.
\end{lem}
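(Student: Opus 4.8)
The plan is to exhibit an explicit similarity transformation carrying $Q(G)$ to $L(G)$. Since $G$ is bipartite, I would first fix a bipartition $V(G) = V_1 \cup V_2$ of the vertex set so that every edge of $G$ joins a vertex of $V_1$ to a vertex of $V_2$; such a partition exists precisely because the bipartite graphs are exactly the $2$-colorable ones.

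Next I would introduce the diagonal signature matrix $S = \mathrm{diag}(s_1, s_2, \ldots, s_n)$, where $s_i = +1$ if $v_i \in V_1$ and $s_i = -1$ if $v_i \in V_2$. This matrix satisfies $S^2 = I$, so $S^{-1} = S$, and in particular $S$ is orthogonal. The key computation is that $S A(G) S = -A(G)$: the $(i,j)$ entry of $S A(G) S$ equals $s_i a_{ij} s_j$, and whenever $a_{ij} = 1$ the vertices $v_i$ and $v_j$ lie in opposite parts, forcing $s_i s_j = -1$; when $a_{ij} = 0$ there is nothing to check. Since $D(G)$ is diagonal it commutes with $S$, so $S D(G) S = D(G)$.

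Combining these identities, $S Q(G) S = S\bigl(D(G) + A(G)\bigr) S = D(G) - A(G) = L(G)$. Hence $Q(G)$ and $L(G)$ are similar matrices, so they share the same characteristic polynomial and therefore the same eigenvalues together with their multiplicities, which is the assertion.

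I do not anticipate a genuine obstacle here; the only point needing care is the existence and consistency of the signature matrix $S$, which is exactly where bipartiteness is used — for a non-bipartite graph an odd cycle obstructs any consistent assignment of signs, and indeed the conclusion fails in that case.
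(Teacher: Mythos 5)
Your proof is correct and complete: the signature matrix $S$ with $S^2=I$ gives $SQ(G)S = D(G)-A(G) = L(G)$, so the two matrices are similar and cospectral, and bipartiteness is used exactly where you say it is. The paper itself offers no proof, quoting the result from Cvetkovi\'c--Rowlinson--Simi\'c, and your diagonal-similarity argument is precisely the standard one found there, so there is nothing to add or repair.
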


\section{Bounding $S_{\alpha}(G)$ in $\mathcal{B}_n$}
\hskip.6cm In this section, we derive a sharp bound of $S_{\alpha}(G)$ with $\alpha\leq1$ for a connected bipartite graph $G$ on $n$ vertices, and  propose an open problem with the bound of $\alpha>1$.

we can  see that $S_\alpha(G)=s_\alpha(G)$ for a bipartite graph $G$ by Lemma \ref{lem22}, so the following results in this section also hold for $s_\alpha.$

Let $\sigma(M)$ be the spectrum of the matrix $M$. By simple calculation, $\sigma(Q(K_{r,s}))=\{r+s, r^{[s-1]}, s^{[r-1]}, 0\}$ where $\lambda^{[t]}$ means that $\lambda$ is an eigenvalue with multiplicity $t$.

From Theorem \ref{thm21}, we immediately have

\begin{thm}\label{thm31}
Let $G$ be a bipartite graph with $r$ and $s$ vertices in its two partite sets. Then we have

{\rm (1)} If $\alpha>0$, then $S_{\alpha}(G)\leq(r+s)^\alpha+(r-1)s^\alpha+(s-1)r^\alpha$
with equality if and only if $G=K_{r,s}$;

{\rm (2)} If $\alpha<0$, then $S_{\alpha}(G)\geq(r+s)^\alpha+(r-1)s^\alpha+(s-1)r^\alpha$
with equality if and only if $G=K_{r,s}$.
\end{thm}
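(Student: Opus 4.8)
The plan is to exploit the edge-monotonicity of $S_\alpha$ established in Theorem \ref{thm21} together with the explicit spectrum of the complete bipartite graph. First I would observe that any bipartite graph $G$ with partite sets of sizes $r$ and $s$ is a spanning subgraph of the complete bipartite graph $K_{r,s}$ on the same bipartition; hence $G$ is obtained from $K_{r,s}$ by deleting some (possibly zero) edges. Applying Theorem \ref{thm21} repeatedly, one edge at a time, gives $S_\alpha(K_{r,s}) \ge S_\alpha(G)$ when $\alpha>0$ and $S_\alpha(K_{r,s}) \le S_\alpha(G)$ when $\alpha<0$, which already yields the desired inequality once we compute $S_\alpha(K_{r,s})$.

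The computation of $S_\alpha(K_{r,s})$ is immediate from the stated fact $\sigma(Q(K_{r,s})) = \{r+s,\ r^{[s-1]},\ s^{[r-1]},\ 0\}$. The nonzero signless Laplacian eigenvalues are therefore $r+s$ once, $r$ with multiplicity $s-1$, and $s$ with multiplicity $r-1$ (note $r,s\ge 1$, and when $r=1$ or $s=1$ the corresponding block simply contributes nothing, consistent with $h$ counting only nonzero eigenvalues). Summing the $\alpha$th powers gives $S_\alpha(K_{r,s}) = (r+s)^\alpha + (s-1)r^\alpha + (r-1)s^\alpha$, which is exactly the claimed bound.

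For the equality characterization I would argue as follows. If $G = K_{r,s}$ equality obviously holds. Conversely, if $G \ne K_{r,s}$ then $G$ is missing at least one edge of $K_{r,s}$, so there is an edge $e$ with $G \subseteq K_{r,s} - e \subseteq K_{r,s}$; since $G$ is connected and bipartite on the given bipartition, $K_{r,s}-e$ is also a graph in the relevant family, and by Theorem \ref{thm21} we get $S_\alpha(K_{r,s}) > S_\alpha(K_{r,s}-e) \ge S_\alpha(G)$ for $\alpha>0$ (the second inequality by further edge deletions and monotonicity, or trivially if $G=K_{r,s}-e$), and the reversed strict inequality for $\alpha<0$. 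Either way the bound is strict, so equality forces $G=K_{r,s}$.

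The only mild subtlety — and the step I would be most careful about — is making the "repeatedly delete edges" argument rigorous when intermediate graphs may be disconnected: Theorem \ref{thm21} as stated is phrased for an edge of $G$ without a connectivity hypothesis (it rests on Lemma \ref{lem21}, which has none), so in fact the monotonicity $S_\alpha(H) \le S_\alpha(H-e)$ holds for \emph{any} graph $H$ and any edge $e$, disconnected or not; I would simply note this once and then the induction on the number of deleted edges goes through without incident. Thus there is no real obstacle here — the theorem is essentially a corollary of Theorem \ref{thm21} plus the known spectrum of $K_{r,s}$ — and the write-up is short.
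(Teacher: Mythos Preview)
Your proposal is correct and matches the paper's approach exactly: the paper states Theorem~\ref{thm31} as an immediate consequence of Theorem~\ref{thm21} together with the explicit spectrum $\sigma(Q(K_{r,s}))=\{r+s, r^{[s-1]}, s^{[r-1]}, 0\}$, without giving any further proof. Your write-up simply fills in the obvious details (repeated edge deletion and the equality case), so there is nothing to add.
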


\begin{thm}\label{thm32}
Let $G$ be a bipartite graph with $n$ vertices and $\alpha\leq1$. Then we have

{\rm (1)} If $\alpha<0$, then $S_{\alpha}(G)\geq n^\alpha+(\left\lfloor\frac{n}{2}\right\rfloor-1)\left\lceil\frac{n}{2}\right\rceil^\alpha+(\left\lceil\frac{n}{2}\right\rceil-1)\left\lfloor\frac{n}{2}\right\rfloor^\alpha$
with equality if and only if $G=K_{\left\lfloor\frac{n}{2}\right\rfloor,\left\lceil\frac{n}{2}\right\rceil}$;

{\rm (2)} If $0<\alpha\leq1$, then $S_{\alpha}(G)\leq n^\alpha+(\left\lfloor\frac{n}{2}\right\rfloor-1)\left\lceil\frac{n}{2}\right\rceil^\alpha+(\left\lceil\frac{n}{2}\right\rceil-1)\left\lfloor\frac{n}{2}\right\rfloor^\alpha$
with equality if and only if $G=K_{\left\lfloor\frac{n}{2}\right\rfloor,\left\lceil\frac{n}{2}\right\rceil}$.
\end{thm}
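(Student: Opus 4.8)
\textbf{Proof proposal for Theorem \ref{thm32}.}

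The plan is to reduce the statement over all bipartite graphs on $n$ vertices to the already-established Theorem \ref{thm31}, which is optimal over bipartite graphs with prescribed part sizes $r$ and $s$. By Theorem \ref{thm21}, adding edges increases $S_\alpha$ for $\alpha>0$ and decreases it for $\alpha<0$; hence in each case the extremal graph among bipartite graphs on $n$ vertices must be a complete bipartite graph $K_{r,s}$ with $r+s=n$ (a connected bipartite graph on $n$ vertices is a spanning subgraph of some $K_{r,s}$ with $r+s=n$, and passing to that supergraph moves $S_\alpha$ in the favorable direction). So it remains to optimize the function
$$f(r,s) = (r+s)^\alpha + (r-1)s^\alpha + (s-1)r^\alpha = n^\alpha + (r-1)s^\alpha + (s-1)r^\alpha$$
over positive integers with $r+s=n$, and to show the optimum is attained exactly at the balanced partition $\{r,s\}=\{\lfloor n/2\rfloor,\lceil n/2\rceil\}$.

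Write $g(r) = (r-1)(n-r)^\alpha + (n-r-1)r^\alpha$ for $1\le r\le n-1$; the goal is to show $g$ is maximized (when $0<\alpha\le 1$) resp. minimized (when $\alpha<0$) at $r=\lfloor n/2\rfloor$. The function $g$ is symmetric about $r=n/2$, so it suffices to compare $g(r)$ with $g(r+1)$ for $r<n/2$ and show the difference has a fixed sign. Computing,
$$g(r+1)-g(r) = r(n-r-1)^\alpha - (r-1)(n-r)^\alpha + (n-r-2)(r+1)^\alpha - (n-r-1)r^\alpha.$$
Grouping as $\bigl[r(n-r-1)^\alpha - (r-1)(n-r)^\alpha\bigr] - \bigl[(n-r-1)r^\alpha - (n-r-2)(r+1)^\alpha\bigr]$ and introducing the auxiliary function $\phi_\alpha(a,b) = a\,b^\alpha - (a-1)(b+1)^\alpha$ (so the bracketed expressions are $\phi_\alpha(r, n-r-1)$ and $\phi_\alpha(n-r-1, r)$), the sign of $g(r+1)-g(r)$ is the sign of $\phi_\alpha(r,n-r-1) - \phi_\alpha(n-r-1,r)$. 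Since $r < n-r-1$ throughout this range, the needed monotonicity of $\phi_\alpha$ in its arguments — together with the concavity of $x\mapsto x^\alpha$ for $0<\alpha\le 1$ and its convexity for $\alpha<0$ — should force $g(r+1)>g(r)$ in the range $0<\alpha\le 1$ and $g(r+1)<g(r)$ when $\alpha<0$, with strict inequality unless we have already reached the balanced point. An alternative, cleaner route is to extend $g$ to a real variable $r\in[1,n-1]$, differentiate, and show $g'(r)$ has the correct sign on $(1,n/2)$ using $g'(r) = (n-r)^\alpha - r^\alpha - \alpha(r-1)(n-r)^{\alpha-1} - \alpha(n-r-1)r^{\alpha-1}$ and the elementary inequalities relating $(n-r)^\alpha - r^\alpha$ to $(n-r)^{\alpha-1}$, $r^{\alpha-1}$; the endpoint and integrality then pin down the maximizer/minimizer at $\lfloor n/2\rfloor$.

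The main obstacle is the discrete optimization of $g$: the expression $g(r+1)-g(r)$ does not factor transparently, and one must carefully track how the sign flips with the sign of $\alpha$ and with the curvature of $x^\alpha$. The continuous-variable approach sidesteps some of this but then requires justifying that the integer maximizer coincides with the real-variable one and handling the case of odd $n$ (where $\lfloor n/2\rfloor \ne \lceil n/2\rceil$) to get the stated equality case. The equality characterization is then immediate: $S_\alpha(G)$ equals the claimed bound only if $G$ is itself complete bipartite with balanced parts, i.e. $G = K_{\lfloor n/2\rfloor,\lceil n/2\rceil}$, since any missing edge strictly changes $S_\alpha$ by Theorem \ref{thm21} and any unbalanced complete bipartite graph is strictly suboptimal by the monotonicity of $g$.
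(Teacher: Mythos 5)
Your reduction to complete bipartite graphs via Theorems \ref{thm21} and \ref{thm31} matches the paper, but the heart of the proof — showing that among $K_{r,n-r}$ the balanced bipartition is extremal when $\alpha<0$ or $0<\alpha\le 1$ — is never actually established. In your discrete route you reduce to the claim that $\phi_\alpha(r,n-r-1)-\phi_\alpha(n-r-1,r)$ has a fixed sign for $r<n-r-1$, and then say the "needed monotonicity of $\phi_\alpha$" together with concavity/convexity "should force" the conclusion; that antisymmetry-type inequality for $\phi_\alpha$ is neither stated precisely nor proved, and it is exactly the delicate point. That this cannot be waved through is shown by the fact that the analogous statement fails once $\alpha>1$: for instance with $n=10$, $\alpha=10$ one has $S_\alpha(K_{2,8})>S_\alpha(K_{5,5})$, which is why the paper only conjectures the case $\alpha>1$ (Conjecture \ref{con31}). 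So any correct argument must use $\alpha\le 1$ in a concrete sign computation, not just as background curvature. Your alternative continuous route also has a sign error: for $g(r)=(r-1)(n-r)^\alpha+(n-r-1)r^\alpha$ the derivative is $g'(r)=(n-r)^\alpha-r^\alpha-\alpha(r-1)(n-r)^{\alpha-1}+\alpha(n-r-1)r^{\alpha-1}$ (your last term has the wrong sign), and again the decisive sign analysis of $g'$ on $(1,n/2)$ is only gestured at.

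For comparison, the paper closes this gap by first rewriting $S_\alpha(K_{r,n-r})=n^\alpha-\bigl[(n-r)^{\alpha+1}+r^{\alpha+1}\bigr]+(n-1)\bigl[(n-r)^\alpha+r^\alpha\bigr]$, so that the derivative takes the symmetric form $f'(r)=(\alpha+1)\bigl[(n-r)^\alpha-r^\alpha\bigr]-\alpha(n-1)\bigl[(n-r)^{\alpha-1}-r^{\alpha-1}\bigr]$; then the Cauchy mean value theorem gives a $\xi\in(r,n-r)$ with $\frac{(n-r)^{\alpha-1}-r^{\alpha-1}}{(n-r)^\alpha-r^\alpha}=\frac{\alpha-1}{\alpha\xi}$, yielding the factorization $f'(r)=\bigl[(n-r)^\alpha-r^\alpha\bigr]\bigl[(\alpha+1)-(\alpha-1)\tfrac{n-1}{\xi}\bigr]$, from which the sign (and the role of $\alpha\le 1$) is immediate, and the integer optimum at $r=\lfloor n/2\rfloor$ with the equality characterization follows. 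You would need to supply an argument of this strength — either prove your $\phi_\alpha$ comparison for the two ranges of $\alpha$, or redo the continuous computation correctly and carry out the sign analysis — before your proposal constitutes a proof.
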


\begin{proof}
The proof of (2) is similar to (1). Now we show (1) holds.

If $\alpha<0$, let $G_*$ be a bipartite graph with $r$ and $s$ vertices in its two partite sets, having the minimum value of $S_\alpha$ among all the connected bipartite graphs with $n$ vertices. Without loss of generality, assume that $1\leq r\leq s$. By Theorem \ref{thm31}, $G_*=K_{r,s}$ for some $r\in \{1, 2, \ldots, \left\lfloor\frac{n}{2}\right\rfloor\}$ with $r+s=n.$ Note that $\sigma(Q(K_{r,s}))=\{r+s, r^{[s-1]}, s^{[r-1]}, 0\}$. Then

\vskip 0.1cm
\hskip2cm $S_\alpha(K_{r,s})=n^\alpha+(r-1)s^\alpha+(s-1)r^\alpha$

\hskip3.6cm $=n^\alpha+(r-1)(n-r)^\alpha+(n-r-1)r^\alpha$

\hskip3.6cm $=n^\alpha-[(n-r)^{\alpha+1}+r^{\alpha+1}]+(n-1)[(n-r)^\alpha+r^\alpha].$

Let $f(r)=-[(n-r)^{\alpha+1}+r^{\alpha+1}]+(n-1)[(n-r)^\alpha+r^\alpha]$ with $1\leq r\leq \left\lfloor\frac{n}{2}\right\rfloor.$ Then

\vskip 0.1cm

\hskip2cm  $f'(r)=(\alpha+1)[(n-r)^\alpha-r^\alpha]-\alpha(n-1)[(n-r)^{\alpha-1}-r^{\alpha-1}].$

\vskip 0.1cm 

 If $r=\left\lfloor\frac{n}{2}\right\rfloor=\frac{n}{2},$ then $n-r=r$, and therefore $f'(r)=0.$

 Otherwise, $r<\frac{n}{2},$ i.e., $n-r>r.$ By Cauchy mean-value Theorem, there exists $\xi\in (r, n-r)$ satisfying
$$\frac{(n-r)^{\alpha-1}-r^{\alpha-1}}{(n-r)^\alpha-r^\alpha}=\frac{(\alpha-1)\xi^{\alpha-2}}{\alpha\xi^{\alpha-1}}=\frac{\alpha-1}{\alpha\xi}.$$

Thus we have

\hskip2.5cm  $f'(r)=[(n-r)^\alpha-r^\alpha][(\alpha+1)-\alpha(n-1)\cdot\frac{(n-r)^{\alpha-1}-r^{\alpha-1}}{(n-r)^\alpha-r^\alpha}]$

\hskip3.5cm $=[(n-r)^\alpha-r^\alpha][(\alpha+1)-(\alpha-1)\cdot\frac{n-1}{\xi}].$

\vskip 0.1cm
Noting that $\alpha<0$,  $n-r>r$ and $0<r<\xi<n-r\leq n-1,$ we have $\alpha-1<0,$ $(n-r)^\alpha-r^\alpha<0,$ $\frac{n-1}{\xi}>1$ and $(\alpha+1)-(\alpha-1)\frac{n-1}{\xi}>(\alpha+1)-(\alpha-1)=2>0.$ Hence, $f'(r)<0,$ that is, $f(r)$ is decreasing for $1\leq r\leq\left\lfloor\frac{n}{2}\right\rfloor.$

Therefore, $S_\alpha(K_{r,n-r})=n^\alpha+f(r)$ with $1\leq r\leq\left\lfloor\frac{n}{2}\right\rfloor$ is minimum if and only if $r=\left\lfloor\frac{n}{2}\right\rfloor.$ It follows that $G_*=K_{\left\lfloor\frac{n}{2}\right\rfloor,\left\lceil\frac{n}{2}\right\rceil},$ and
$S_{\alpha}(G)\geq n^\alpha+(\left\lfloor\frac{n}{2}\right\rfloor-1)\left\lceil\frac{n}{2}\right\rceil^\alpha+(\left\lceil\frac{n}{2}\right\rceil-1)\left\lfloor\frac{n}{2}\right\rfloor^\alpha.$
\end{proof}

\begin{rem}\label{rem31}
 Since $Q(G)$ and $L(G)$ share the same eigenvalues if $G$ is bipartite by Lemma \ref{lem22}, $LEL(G)=S_{\frac{1}{2}}(G)$ and $Kf(G)=nS_{-1}(G)$ for $G$ is a bipartite graph. Hence,  Theorem \ref{thm31} and Theorem \ref{thm32} generalize the results of Liu and Huang for the Laplacian-energy-like invariant(Corollary 2.4, \cite{2011b}) and the results of Yang for the Kirchhoff index(Theorem 3.1, \cite{2012b}). In our proof, some techniques in \cite{2011a} are referred.
\end{rem}

Naturally, we put forward the following conjecture to close this section.

\begin{con}\label{con31}
Let $G$ be a bipartite graph with $n$ vertices. If $\alpha>1,$ then  $$S_{\alpha}(G)\leq n^\alpha+(\left\lfloor\frac{n}{2}\right\rfloor-1)\left\lceil\frac{n}{2}\right\rceil^\alpha+(\left\lceil\frac{n}{2}\right\rceil-1)\left\lfloor\frac{n}{2}\right\rfloor^\alpha,$$
with equality if and only if $G=K_{\left\lfloor\frac{n}{2}\right\rfloor,\left\lceil\frac{n}{2}\right\rceil}$.
\end{con}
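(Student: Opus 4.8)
The plan is to mirror the structure of the proof of Theorem \ref{thm32}, but to isolate carefully the single place where the hypothesis $\alpha\le 1$ was used so as to see what must be replaced when $\alpha>1$. First I would reduce to complete bipartite graphs exactly as before: by Theorem \ref{thm21}, since adding edges strictly increases $S_\alpha$ for $\alpha>0$, any bipartite graph with partite sets of sizes $r$ and $s$ satisfies $S_\alpha(G)\le S_\alpha(K_{r,s})$, with equality iff $G=K_{r,s}$. Using $\sigma(Q(K_{r,s}))=\{r+s,\ r^{[s-1]},\ s^{[r-1]},\ 0\}$ and $r+s=n$, I would again write
$$S_\alpha(K_{r,n-r})=n^\alpha+f(r),\qquad f(r)=-\bigl[(n-r)^{\alpha+1}+r^{\alpha+1}\bigr]+(n-1)\bigl[(n-r)^{\alpha}+r^{\alpha}\bigr],$$
for $1\le r\le\lfloor n/2\rfloor$. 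So the whole conjecture reduces to the claim that $f$ is \emph{increasing} on $[1,\lfloor n/2\rfloor]$ when $\alpha>1$; then the maximum of $S_\alpha(K_{r,n-r})$ over $r\in\{1,\dots,\lfloor n/2\rfloor\}$ is at $r=\lfloor n/2\rfloor$, giving $G=K_{\lfloor n/2\rfloor,\lceil n/2\rceil}$.

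Next I would analyze $f'(r)=(\alpha+1)\bigl[(n-r)^\alpha-r^\alpha\bigr]-\alpha(n-1)\bigl[(n-r)^{\alpha-1}-r^{\alpha-1}\bigr]$. For $r<n/2$ I would apply the Cauchy mean value theorem to the pair of functions $t\mapsto t^{\alpha-1}$ and $t\mapsto t^\alpha$ on $[r,n-r]$, obtaining $\xi\in(r,n-r)$ with
$$\frac{(n-r)^{\alpha-1}-r^{\alpha-1}}{(n-r)^{\alpha}-r^{\alpha}}=\frac{\alpha-1}{\alpha\,\xi},$$
hence $f'(r)=\bigl[(n-r)^\alpha-r^\alpha\bigr]\bigl[(\alpha+1)-(\alpha-1)\tfrac{n-1}{\xi}\bigr]$. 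Now $\alpha>1$ forces $(n-r)^\alpha-r^\alpha>0$ and $\alpha-1>0$, so the sign of $f'(r)$ is the sign of $(\alpha+1)-(\alpha-1)\tfrac{n-1}{\xi}$, i.e.\ $f'(r)>0$ iff $\xi>\tfrac{\alpha-1}{\alpha+1}(n-1)$. This is exactly where the argument breaks: for $\alpha\le 1$ the bracket was automatically positive, but for $\alpha>1$ one only knows $r<\xi<n-r$, and when $r$ is small $\xi$ can be close to $r$, which may be well below $\tfrac{\alpha-1}{\alpha+1}(n-1)$; so the naive mean-value estimate is not enough.

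I see two routes past this obstacle, and I expect the sign control of $f'$ to be the crux. Route one: sharpen the location of $\xi$. Solving $\frac{(n-r)^{\alpha-1}-r^{\alpha-1}}{(n-r)^{\alpha}-r^{\alpha}}=\frac{\alpha-1}{\alpha\xi}$ gives $\xi=\frac{\alpha-1}{\alpha}\cdot\frac{(n-r)^{\alpha}-r^{\alpha}}{(n-r)^{\alpha-1}-r^{\alpha-1}}$, so $f'(r)>0$ is equivalent to the clean inequality $(\alpha+1)\bigl[(n-r)^{\alpha}-r^{\alpha}\bigr]>\alpha(n-1)\bigl[(n-r)^{\alpha-1}-r^{\alpha-1}\bigr]$; dividing through by $(n-r)-r>0$ and writing both sides via the integral representation $x^\beta-y^\beta=\beta\int_y^x t^{\beta-1}\,dt$ turns this into comparing weighted averages of $t^{\alpha-1}$ and $t^{\alpha-2}$ on $[r,n-r]$, which one can attack by convexity/monotonicity of $t\mapsto t^{\alpha-1}$ together with the bound $n-r\le n-1$. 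Route two (probably cleaner for a complete proof, though I only sketch it): treat $g(r):=f(\lfloor n/2\rfloor)-f(r)\ge 0$ directly as a function comparison, using that $(n-r)^{\alpha+1}+r^{\alpha+1}$ is a convex Schur-type function and $(n-r)^\alpha+r^\alpha$ likewise, so that moving $r$ toward $n/2$ decreases the first bracket's contribution by more than it decreases the second's after multiplication by $n-1$ — here the factor $n-1$ versus the "effective" weights $\alpha+1$ and $\alpha$ is what has to be tracked, and one may need the extra slack that $\lceil n/2\rceil\le n-1$ provides for $n\ge 3$ (the case $n=2$ being trivial). Either way, the analytic inequality $(\alpha+1)\bigl[(n-r)^{\alpha}-r^{\alpha}\bigr]\ge \alpha(n-1)\bigl[(n-r)^{\alpha-1}-r^{\alpha-1}\bigr]$ for $1\le r<n/2$ and $\alpha>1$ is the single nontrivial fact on which the conjecture rests, and it is precisely this inequality I would concentrate on.
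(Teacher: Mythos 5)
You are attempting to prove a statement that the paper itself leaves open: Conjecture \ref{con31} is stated without proof, so there is no argument in the paper to compare against. More importantly, your proposal has a genuine gap, and it is not a repairable one. Your reduction to complete bipartite graphs and to the monotonicity of $f(r)=-[(n-r)^{\alpha+1}+r^{\alpha+1}]+(n-1)[(n-r)^{\alpha}+r^{\alpha}]$ is fine, and you correctly isolate the crux, namely the inequality $(\alpha+1)\left[(n-r)^{\alpha}-r^{\alpha}\right]\geq \alpha(n-1)\left[(n-r)^{\alpha-1}-r^{\alpha-1}\right]$ for $1\leq r<\frac{n}{2}$. But this inequality is not merely unproven in your sketch; it is false once $\alpha$ is moderately large. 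Take $n=6$, $\alpha=5$, $r=\frac{5}{2}$: the left side is $6\,(3.5^{5}-2.5^{5})\approx 2565$, while the right side is $25\,(3.5^{4}-2.5^{4})=2775$, so $f'(r)<0$ there and $f$ is not increasing on $[1,\lfloor n/2\rfloor]$.

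In fact the conjecture itself fails for large $\alpha$, so neither of your two routes (nor any other) can close it as stated. With $n=6$ and $\alpha=5$, the graph $K_{2,4}$ has signless Laplacian spectrum $\{6,4,2,2,2,0\}$, giving $S_5(K_{2,4})=6^{5}+4^{5}+3\cdot 2^{5}=8896$, whereas the conjectured bound is $S_5(K_{3,3})=6^{5}+4\cdot 3^{5}=8748$; thus $K_{2,4}$ violates the claimed inequality. The reason is structural: for $K_{r,n-r}$ the largest eigenvalue is always $n$, and for small $r$ the next eigenvalue $n-r$ is close to $n$, so as $\alpha\to\infty$ the term $(n-2)^{\alpha}$ in $S_\alpha(K_{2,n-2})$ eventually dominates $(n-2)\lceil n/2\rceil^{\alpha}$ in $S_\alpha(K_{\lfloor n/2\rfloor,\lceil n/2\rceil})$. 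So the hypothesis $\alpha\leq 1$ in Theorem \ref{thm32} is not a technical artifact of the Cauchy mean-value step, as your proposal assumes; the balance point of $S_\alpha(K_{r,n-r})$ genuinely moves away from $r=\lfloor n/2\rfloor$ for large $\alpha$. At best one could hope to prove the statement for $\alpha$ in a limited range above $1$ (with a threshold depending on $n$), and your proposal neither identifies such a range nor proves the crux inequality on it.
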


\section{Bounding $S_\alpha(G)$ in $\mathcal{V}_n^k$}
\hskip.6cm
In this section, we characterize the extremal graph of $S_\alpha(G)$ in $\mathcal{V}_n^k$ and derive a sharp upper bound of $S_\alpha(G)$ with $\alpha\geq1$ in $\mathcal{V}_n^k.$ Moreover, we propose an open problem with the bound of $\alpha<1$

By simple calculation, $\sigma(Q(K_n))=\{2n-2, (n-2)^{[n-1]}\}.$

Actually, Theorem \ref{thm21} implies that

\begin{thm}\label{thm41}
Let $G\in\mathcal{F}_n$. Then we have\\
{\rm (1)} If $\alpha>0,$ then
$S_\alpha(G)\leq 2^\alpha(n-1)^\alpha+(n-1)(n-2)^\alpha$
with equality if and only if $G=K_n.$
{\rm (2)} If $\alpha<0,$ then $S_\alpha(G)\geq 2^\alpha(n-1)^\alpha+(n-1)(n-2)^\alpha$
with equality if and only if $G=K_n.$
\end{thm}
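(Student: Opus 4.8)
The plan is to reduce the problem to a one-parameter optimization, exactly as in the proof of Theorem~\ref{thm32}. The starting point is Theorem~\ref{thm21}: adding edges strictly increases $S_\alpha$ when $\alpha>0$ and strictly decreases it when $\alpha<0$. Hence among all graphs in $\mathcal{F}_n$, the extremal graph for $S_\alpha$ is the one with the most edges, i.e. the complete graph $K_n$, since $K_n$ contains every graph on $n$ vertices as a spanning subgraph and one can reach $K_n$ from any proper subgraph by a sequence of edge additions. Concretely, if $G\ne K_n$ then there is a missing edge $e$, and $G$ is a proper spanning subgraph of $K_n$; by repeated application of Theorem~\ref{thm21} along a chain $G=G_0\subset G_1\subset\cdots\subset G_t=K_n$ we get $S_\alpha(G)<S_\alpha(K_n)$ for $\alpha>0$ and $S_\alpha(G)>S_\alpha(K_n)$ for $\alpha<0$.

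Next I would evaluate $S_\alpha(K_n)$ using the spectrum $\sigma(Q(K_n))=\{2n-2,\ (n-2)^{[n-1]}\}$ stated just above the theorem. Since $K_n$ is connected with $n\ge2$ (so all signless Laplacian eigenvalues are positive, $n-2\ge0$ with strict positivity when $n\ge3$; the case $n=2$ gives $Q(K_2)$ with eigenvalues $\{2,0\}$ and should be checked separately or excluded), we obtain
\[
S_\alpha(K_n)=(2n-2)^\alpha+(n-1)(n-2)^\alpha=2^\alpha(n-1)^\alpha+(n-1)(n-2)^\alpha,
\]
which is precisely the claimed bound. Combining this with the monotonicity argument yields both inequalities together with the characterization of equality: equality holds if and only if no edge can be added, i.e. $G=K_n$.

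There is essentially no analytic obstacle here—unlike Theorem~\ref{thm32}, there is no need for a Cauchy mean-value argument or for optimizing $f(r)$, because $K_n$ is the unique maximal element of $\mathcal{F}_n$ under the subgraph order, so the reduction via Theorem~\ref{thm21} immediately pins down the extremal graph. The only points requiring a little care are: (i) confirming that the relevant eigenvalues are nonzero so that the count $h$ in the definition of $S_\alpha$ matches (for $n\ge3$ all $n$ eigenvalues of $Q(K_n)$ are positive, so $h=n$), and (ii) handling $\alpha<0$, where one must ensure one is never raising $0$ to a negative power—again fine for $n\ge3$. So the main (mild) obstacle is just the bookkeeping of degenerate small cases; the structural part of the proof is a one-line consequence of Theorem~\ref{thm21}.
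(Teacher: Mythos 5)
Your proposal is correct and is exactly the paper's argument: the authors give no more proof than the remark ``Actually, Theorem~\ref{thm21} implies that\dots'' together with the computation $\sigma(Q(K_n))=\{2n-2,(n-2)^{[n-1]}\}$, i.e.\ repeated edge addition via Theorem~\ref{thm21} pins down $K_n$ as the extremal graph and the bound is just $S_\alpha(K_n)$. Your extra remarks on the degenerate case $n=2$ and on $h=n$ for $n\ge 3$ are sensible bookkeeping the paper silently skips, but they do not change the route.
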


Throughout the following paper, let $G^*$, $G_*$ be the graphs having the maximum and the minimum value of $S_\alpha(G)$ among the graphs in $\mathcal{V}_n^k$, respectively.
Let $|U|$ be the cardinality of a finite set $U$, and $G(i)=K_k\vee(K_i\cup K_{n-k-i})$ where $i\in\{1, 2, \ldots, \lfloor\frac{n-k}{2}\rfloor\}.$

\begin{thm}\label{thm42}
Let $n,$ $k$ be positive integers with $1\leq k\leq n-1,$ $G^*(G_*)$ be defined as above. Then\\
{\rm (1)} $G^*\in\{G(1), G(2), \cdots, G(\lfloor\frac{n-k}{2}\rfloor)\}$ when $\alpha>0;$\\
{\rm (2)} $G_*\in\{G(1), G(2), \cdots, G(\lfloor\frac{n-k}{2}\rfloor)\}$ when $\alpha<0.$
\end{thm}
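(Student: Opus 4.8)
The plan is to use Theorem~\ref{thm21} (edge monotonicity of $S_\alpha$) to reduce the problem to edge-maximal graphs in $\mathcal{V}_n^k$, and then identify those edge-maximal graphs explicitly. First I would observe that since $\kappa(G^*)\leq k$, there is a vertex cut $U$ with $|U|\leq k$; by adding edges we may assume $|U|=k$ exactly (adding vertices to a cut keeps it a cut, and by Theorem~\ref{thm21} adding edges strictly increases $S_\alpha$ when $\alpha>0$). So the extremal graph has a $k$-vertex cut $U$ whose removal leaves at least two components $C_1,\dots,C_t$ with $t\geq2$. Now I would argue that in the extremal configuration: (a) every vertex of $U$ is adjacent to every other vertex of $U$ and to every vertex outside $U$ — otherwise we could add such an edge without destroying the property that $U$ is a cut, strictly increasing $S_\alpha$; (b) each component $C_j$ induces a complete graph, again by edge-addition within a component; (c) there are exactly two components, since if $t\geq3$ we may merge $C_2,\dots,C_t$ into a single clique by adding all edges between them — this does not reduce the connectivity below $k$ because $U$ is still a cut separating $C_1$ from the merged piece — and strictly increases $S_\alpha$. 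Putting (a)–(c) together forces $G^*=K_k\vee(K_i\cup K_{n-k-i})=G(i)$ for some $i$ with $1\leq i\leq n-k-1$, and since $G(i)\cong G(n-k-i)$ we may take $1\leq i\leq\lfloor\frac{n-k}{2}\rfloor$, which is the claim. The case $\alpha<0$ for $G_*$ is entirely parallel: every edge we add now strictly \emph{decreases} $S_\alpha$ by Theorem~\ref{thm21}, so the \emph{minimizer} is the edge-maximal graph, and the same three observations apply verbatim with inequalities reversed.

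The one subtlety I would be careful about is the connectivity bookkeeping: when I add edges I must verify that I stay inside $\mathcal{V}_n^k$, i.e.\ that $\kappa$ does not exceed $k$. The safe way to phrase this is to fix the cut set $U$ of size $k$ once and for all and only add edges that do not create a $U$-avoiding path between distinct ``sides''; concretely, edges inside $U$, edges from $U$ to anywhere, and edges inside a single component are all harmless, while edges between two different components are harmless precisely when we have already decided those two components will be amalgamated into one. After amalgamating down to two components, $U$ remains a genuine cut (removing it disconnects $K_i$ from $K_{n-k-i}$), so $\kappa(G(i))\leq k$ and $G(i)\in\mathcal{V}_n^k$ indeed. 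One should also note the degenerate possibility that one ``component'' is empty after we insist $|U|=k$; but since $t\geq2$ and the total number of non-cut vertices is $n-k\geq1$, and each component is nonempty by definition of a component, we get $n-k\geq2$ and both $K_i$, $K_{n-k-i}$ are nonempty, i.e.\ $i\geq1$ and $n-k-i\geq1$; if $n-k=1$ then $G$ has a cut of size $k$ only in the trivial sense and in fact $\kappa(G)\leq k$ forces $n-k\geq 1$ but no two-component split exists unless we also allow $U\subsetneq$ the deleted set — this boundary case ($G=K_n$ when $k=n-1$, handled by Theorem~\ref{thm41}) should be mentioned or excluded.

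The main obstacle, such as it is, is not any single hard estimate — there is essentially no computation in this theorem, which only locates $G^*$ (resp.\ $G_*$) within the finite family $\{G(1),\dots,G(\lfloor\frac{n-k}{2}\rfloor)\}$ rather than pinning down the optimal $i$. The real work is the careful case analysis showing edge-maximality forces the ``complete-split'' structure $K_k\vee(K_i\cup K_{n-k-j})$, and in particular the amalgamation step reducing $t\geq3$ components to $t=2$ while certifying that connectivity stays $\leq k$. I expect the write-up to proceed as: (i) reduce to $|U|=k$; (ii) saturate all edges incident to $U$; (iii) saturate edges inside each component; (iv) amalgamate extra components; (v) conclude $G^*=G(i)$ and symmetrize $i\leftrightarrow n-k-i$; with Theorem~\ref{thm21} invoked at steps (ii), (iii), (iv) to guarantee strict monotonicity of $S_\alpha$, hence that the true extremizer must already be in this form.
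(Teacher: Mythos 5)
Your proposal is correct and takes essentially the same route as the paper: both arguments rest on the edge-monotonicity of Theorem~\ref{thm21} together with a cut-structure analysis (saturating edges and reducing to two components) that forces the extremal graph to be $K_k\vee(K_i\cup K_{n-k-i})=G(i)$, with the $\alpha<0$ case handled by reversing the inequality. The only cosmetic differences are that you enlarge a small cut to size exactly $k$ by adding vertices, whereas the paper shows $\kappa(G^*)=k$ by adding an edge between two components and using $U\cup\{u\}$ as a cut, and the boundary case $k=n-1$ that you flag is dealt with in the paper exactly as you suggest, as a separate case via Theorem~\ref{thm41}.
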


\begin{proof}
The proof of (2) is similar to (1). We omit it. Now we show (1) holds.

Let $G$ be any graph in $\mathcal{V}_n^k$ and $\alpha>0$.

{\bf Case 1: } $k=n-1.$

From Theorem \ref{thm41}, $S_\alpha(G)\leq S_\alpha(K_n)$ with equality if and only if $G=K_n.$ Note that $K_n=G(1),$ the result is true for $k=n-1.$

{\bf Case 2: } $1\leq k\leq n-2.$

By Theorem \ref{thm21}, there is $G^*$ in $\mathcal{V}_n^k.$ Let $U\subseteq V(G^*)$ such that $G^*-U$ is a disconnected graph and $|U|=\kappa(G^*).$ Hence, $|U|\leq k.$ Let $G_1, G_2, \ldots, G_r$ be the connected components of $G^*-U.$

First, we claim that $r=2.$ If $r>2,$ then we can construct a graph $H=G^*+e$ where $e$ is an edge connecting a vertex in $G_1$ with a vertex in $G_2.$ Clearly, $\kappa(H)\leq |U|\leq k$ since $H$ is connected and $H-U=G^*+e-U$ is disconnected. Thus, $H\in \mathcal{V}_n^k$ and $G^*=H-e.$ By Theorem \ref{thm21}, $S_\alpha(G^*)<S_\alpha(H),$ which is a contradiction. Therefore $r=2,$ that is, $G^*-U=G_1\cup G_2.$

Second, we claim that $\kappa(G^*)=k$. If $\kappa(G^*)<k,$ then $|U|<k.$ Construct a graph $H=G^*+e$ where $e$ is an edge joining a vertex $u\in V(G_1)$ with a vertex $v\in V(G_2).$ Hence, $\kappa(H)\leq |U|+1\leq k$ since $H-U$ is a connected graph and $H-U\cup\{u\}$ is disconnected. Therefore $H\in\mathcal{V}_n^k.$ By Theorem \ref{thm21}, $S_\alpha(G^*)<S_\alpha(H),$ which is also a contradiction. Thus, $\kappa(G^*)=k.$

Let $|V(G_1)|=i$. Then $|V(G_2)|=n-k-i.$ Repeating application of Theorem \ref{thm21} enables to write $G^*=K_k\cup(K_i\cup K_{n-k-i})=G(i)$ where $i\in\{1, 2, \ldots, \lfloor\frac{n-k}{2}\rfloor\}.$
\end{proof}

\begin{rem}\label{rem41}
In our proofs of Theorem \ref{thm42}, some techniques in \cite{2013} are referred.
\end{rem}

When $\alpha\geq1,$ we search for the value of $i$ for which $S_\alpha(G(i))$ ($i\in\{1, 2, \ldots, \lfloor\frac{n-k}{2}\rfloor\}$) is maximum. In this proof, we need the spectrum of $Q(G(i)),$ which is given as follows in \cite{2013}.

\begin{lem}\label{lem41}{\rm (\cite{2013})}
The spectrum of $Q(G(i))$ is
$$\sigma(Q(G(i)))=\{q_1(i), q_2, q_3(i), (n-2)^{[k-1]}, (k+i-2)^{[i-1]}, (n-i-2)^{[n-k-i-1]}\},$$
where

\hskip2cm $q_1(i)=n-2+\frac{k}{2}+\frac{1}{2}\sqrt{(k-2n)^2+16i(k-n+i)},$ \quad $q_2=n-2,$

\noindent and

\hskip2cm $q_3(i)=n-2+\frac{k}{2}-\frac{1}{2}\sqrt{(k-2n)^2+16i(k-n+i)}.$

\end{lem}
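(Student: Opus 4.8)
The plan is to exploit the symmetry of $G(i)=K_k\vee(K_i\cup K_{n-k-i})$ through an equitable partition. First I would record the degrees: each of the $k$ vertices of the dominating clique has degree $n-1$, each of the $i$ vertices of $K_i$ has degree $k+i-1$, and each of the $n-k-i$ vertices of $K_{n-k-i}$ has degree $n-i-1$. The partition $\pi=\{V(K_k),V(K_i),V(K_{n-k-i})\}$ is equitable with respect to $Q(G(i))$, so the spectrum of $Q(G(i))$ splits into the spectrum of the $3\times3$ quotient matrix $B$ together with the eigenvalues carried by eigenvectors that vanish off a single cell and have zero coordinate sum on that cell.

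Second, I would dispose of these local eigenvectors. If $x$ is supported on $V(K_k)$ with zero coordinate sum, then for $v\in V(K_k)$ one has $(Qx)_v=(n-1)x_v+\sum_{w\sim v}x_w=(n-1)x_v-x_v=(n-2)x_v$ (the within-clique sum equals $-x_v$, and every vertex outside sees a zero sum), while $(Qx)_u=0$ for every $u\notin V(K_k)$; hence $n-2$ is an eigenvalue of multiplicity at least $k-1$. The identical computation on $V(K_i)$ gives eigenvalue $k+i-2$ with multiplicity at least $i-1$ (here the vertices of $K_{n-k-i}$ see a zero sum because they are nonadjacent to $V(K_i)$), and on $V(K_{n-k-i})$ gives eigenvalue $n-i-2$ with multiplicity at least $n-k-i-1$. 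That accounts for $n-3$ eigenvalues, so the remaining three must be precisely the eigenvalues of $B$.

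Third, I would compute $B$ explicitly; using that its $(p,q)$ entry equals $\delta_{pq}$ times the common degree in cell $p$ plus the number of neighbours in cell $q$ of a vertex of cell $p$,
$$B=\begin{pmatrix} n+k-2 & i & n-k-i\\ k & k+2i-2 & 0\\ k & 0 & 2n-k-2i-2\end{pmatrix}.$$
Expanding $\det(\lambda I-B)$ along the first row and substituting $\lambda=n-2$ produces a common factor $n-k-2i$ together with a bracket that collapses to $0$, so $\lambda=n-2$ is a root; this is the eigenvalue $q_2$. Dividing the cubic by $\lambda-(n-2)$ leaves a monic quadratic of trace $2n+k-4$ and constant term $2(n-1)(k-2)+4i(n-k-i)$, and since $(2n+k-4)^2-4\big(2(n-1)(k-2)+4i(n-k-i)\big)=(k-2n)^2+16i(k-n+i)$, its roots are exactly $q_{1,3}(i)=n-2+\tfrac{k}{2}\pm\tfrac12\sqrt{(k-2n)^2+16i(k-n+i)}$. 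A final tally confirms that the multiplicities $1+1+1+(k-1)+(i-1)+(n-k-i-1)=n$, so the list is complete and the stated multiplicities are exact.

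The only genuinely delicate point is the middle algebraic step — verifying that $\lambda=n-2$ divides the characteristic polynomial of $B$ and that the residual quadratic carries precisely the discriminant $(k-2n)^2+16i(k-n+i)$; the rest of the argument is bookkeeping with the equitable partition. An alternative route would be to assemble the $Q$-spectrum of $G(i)$ from the known $Q$-spectra of $K_k$, $K_i$, $K_{n-k-i}$ and the spectral effect of the join operation, but the equitable-partition argument above seems the most direct.
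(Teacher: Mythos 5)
Your proof is correct. Note that the paper itself gives no argument for this lemma: it is quoted verbatim from Rojo and Lenes (reference [2013]), so there is no internal proof to compare against; your equitable-partition derivation is a legitimate self-contained justification of the cited statement. The details check out: the three cells $V(K_k)$, $V(K_i)$, $V(K_{n-k-i})$ have common degrees $n-1$, $k+i-1$, $n-i-1$; the zero-sum vectors supported on a single cell are indeed $Q$-eigenvectors with eigenvalues $n-2$, $k+i-2$, $n-i-2$ and multiplicities $k-1$, $i-1$, $n-k-i-1$, and they span the orthogonal complement of the cell-wise constant vectors, so the remaining three eigenvalues are those of the quotient matrix $B$ you wrote down (which is correct: diagonal degree plus neighbour counts). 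I verified the delicate algebraic step you flagged: $\det\bigl(B-(n-2)I\bigr)=k(n-k-2i)\bigl[(n-k-2i)-(n-k-2i)\bigr]=0$, so $n-2$ is always a root; the residual quadratic has trace $3n+k-6-(n-2)=2n+k-4$ and constant term $\det(B)/(n-2)=2(n-1)(k-2)+4i(n-k-i)$, and the identity $(2n+k-4)^2-4\bigl[2(n-1)(k-2)+4i(n-k-i)\bigr]=(k-2n)^2+16i(k-n+i)$ holds, giving exactly $q_1(i)$ and $q_3(i)$; the multiplicities sum to $n$. This is essentially the standard route for such join graphs (and close in spirit to what Rojo--Lenes do), so nothing further is needed.
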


\begin{thm}\label{thm43}
Let $n,$ $k$ be positive integers with $1\leq k\leq n-1,$ $G\in\mathcal{V}_n^k$ and $\alpha\geq1.$ Then
$$S_\alpha(G)\leq b_\alpha(n,k) \eqno (4.1)$$
where

\hskip2cm $b_\alpha(n,k)=k(n-2)^\alpha+(n-k-2)(n-3)^\alpha$

\hskip3.75cm       $+\left[n-2+\frac{k}{2}+\frac{1}{2}\sqrt{(k-2n)^2+16(k-n+1)}\right]^\alpha$

\hskip3.75cm       $+\left[n-2+\frac{k}{2}-\frac{1}{2}\sqrt{(k-2n)^2+16(k-n+1)}\right]^\alpha.$

\vskip 0.1cm
\noindent The equality $(4.1)$ holds if and only if $G=K_k\vee(K_1\cup K_{n-k-1}).$
\end{thm}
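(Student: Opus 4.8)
By Theorem \ref{thm42}(1), we already know that the maximizer $G^*$ of $S_\alpha$ over $\mathcal{V}_n^k$ is one of the graphs $G(i)=K_k\vee(K_i\cup K_{n-k-i})$ for $i\in\{1,2,\ldots,\lfloor\frac{n-k}{2}\rfloor\}$, and $b_\alpha(n,k)$ is exactly $S_\alpha(G(1))$ computed via Lemma \ref{lem41} with $i=1$ (note $(k+i-2)^{[i-1]}$ contributes nothing when $i=1$, and $(n-i-2)^{[n-k-i-1]}=(n-3)^{[n-k-2]}$). So the whole content of the theorem is the claim that, among $i\in\{1,\ldots,\lfloor\frac{n-k}{2}\rfloor\}$, the function $i\mapsto S_\alpha(G(i))$ is maximized \emph{only} at $i=1$ when $\alpha\ge 1$. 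The plan is therefore to write $S_\alpha(G(i))$ explicitly using Lemma \ref{lem41} as a function of the real variable $i$, and show it is strictly decreasing on $[1,\frac{n-k}{2}]$.

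First I would set $g(i)=S_\alpha(G(i))$. Using Lemma \ref{lem41}, grouping the $q_1(i)$ and $q_3(i)$ terms together and treating the remaining ``bulk'' eigenvalues $(n-2)^{[k-1]}$, $(k+i-2)^{[i-1]}$, $(n-i-2)^{[n-k-i-1]}$ plus the isolated $q_2=n-2$, one gets
\begin{align*}
g(i) &= q_1(i)^\alpha + q_3(i)^\alpha + k(n-2)^\alpha + (i-1)(k+i-2)^\alpha + (n-k-i-1)(n-i-2)^\alpha,
\end{align*}
where I have absorbed $q_2=n-2$ into the $k(n-2)^\alpha$ block (so that block carries $k$ copies: $k-1$ from the spectrum plus $q_2$). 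A convenient reduction: since $\sum_i q_i(G(i))=2m(G(i))$ is \emph{independent of $i$} (the number of edges of $G(i)=K_k\vee(K_i\cup K_{n-k-i})$ does not depend on $i$), we have $q_1(i)+q_3(i)=C$ for a constant $C=2(n-2)+k - (n-2) - \ldots$; more usefully, $q_1(i)+q_3(i)=2n-4+k$ from Lemma \ref{lem41}, a constant. So I only need to control how $q_1(i)q_3(i)$ — equivalently the discriminant $\Delta(i)=(k-2n)^2+16i(k-n+i)$ — and the two polynomial-in-$i$ blocks vary.

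The key steps, in order, would be: (i) Show $\Delta(i)$ is increasing in $i$ on $[1,\frac{n-k}{2}]$: indeed $\frac{d}{di}\,16i(k-n+i)=16(k-n+2i)$, which for $i\le\frac{n-k}{2}$ gives $k-n+2i\le 0$, so $\Delta$ is actually \emph{decreasing}; hence $q_1(i)$ decreases and $q_3(i)$ increases as $i$ grows, while their sum stays fixed. Since $x\mapsto x^\alpha$ is convex for $\alpha\ge1$, spreading the pair $(q_1,q_3)$ farther apart \emph{increases} $q_1^\alpha+q_3^\alpha$; so the pair-term alone is maximized at $i=1$ (largest spread). Wait — I must check the direction carefully: at $i=1$ the spread $\sqrt{\Delta}$ is largest, so the convex sum is largest at $i=1$. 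Good. (ii) Handle the two remaining blocks $h(i):=(i-1)(k+i-2)^\alpha+(n-k-i-1)(n-i-2)^\alpha$: show $h(i)\le h(1)=(n-k-2)(n-3)^\alpha$ on the range. This is the combinatorial heart — it says that ``merging'' the two cliques ($i=1$, one clique as small as possible) beats any balanced split, for the sum-of-$\alpha$-powers functional with $\alpha\ge1$. I would prove it either by showing $h'(i)<0$ directly (differentiating and using $\alpha\ge1$ together with $1\le i\le\frac{n-k}{2}$ to sign the derivative, much as in the proof of Theorem \ref{thm32}), or by a discrete convexity/majorization argument comparing $G(i)$ and $G(i+1)$: passing from $G(i+1)$ to $G(i)$ moves one vertex from the smaller clique's side to the larger, and one checks the net change in $S_\alpha$ is positive using Lemma \ref{lem21}-type interlacing plus convexity of $x^\alpha$. (iii) Combine (i) and (ii): $g(i)=q_1(i)^\alpha+q_3(i)^\alpha+k(n-2)^\alpha+h(i)$, with the first pair maximized at $i=1$ and $h$ maximized at $i=1$, and the middle term constant; hence $g(i)\le g(1)=b_\alpha(n,k)$ with equality iff $i=1$, i.e. $G=K_k\vee(K_1\cup K_{n-k-1})$. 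Finally I would separately record the boundary case $k=n-1$ (then $\mathcal{V}_n^{n-1}=\mathcal{F}_n$, $G(1)=K_n$, and the bound reduces to Theorem \ref{thm41}(1)), so that the statement holds for the full range $1\le k\le n-1$.

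The main obstacle I expect is step (ii) — controlling $h(i)$ — and, entangled with it, making sure the monotonicity in step (i) is not \emph{reversed} by $h$; that is, one must verify that the gain in the pair-term and the behaviour of $h$ point the same way. The clean route is probably the discrete comparison $S_\alpha(G(i))-S_\alpha(G(i+1))$: since $G(i)$ and $G(i+1)$ have the same number of edges, $\sum q_j$ is unchanged, and one can try to exhibit a majorization between the two spectra (the spectrum of $Q(G(1))$ majorizing that of every $Q(G(i))$), after which $\sum q_j^\alpha$ comparison for convex $x^\alpha$, $\alpha\ge1$, is immediate by Karamata's inequality. Establishing that majorization from the explicit eigenvalue list in Lemma \ref{lem41} — checking the partial-sum inequalities $\sum_{j\le t} q_j(G(1))\ge \sum_{j\le t} q_j(G(i))$ for all $t$ — is the real work, but it is a finite, structured computation given the closed forms for $q_1(i),q_3(i)$ and the clean multiplicities of the bulk eigenvalues.
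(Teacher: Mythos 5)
Your plan is essentially the paper's proof: reduce to $G^*=G(i)$ via Theorem \ref{thm42}, write $S_\alpha(G(i))$ from Lemma \ref{lem41}, and show the resulting function of $i$ is strictly decreasing on $[1,\lfloor\frac{n-k}{2}\rfloor]$. The paper does this by differentiating the whole expression $f(x)$ at once and signing $f'(x)<0$ via the Cauchy mean-value theorem (with $\xi>x+k-2\ge k-1$, so $(\alpha-1)\frac{k-1}{\xi}-(\alpha+1)<0$, together with the sign of the $\sqrt{\Delta}$-term); in effect it signs the two pieces you isolate separately, so your decomposition is the same argument slightly reorganized. Your treatment of the pair $q_1(i),q_3(i)$ by convexity (fixed sum $q_1+q_3=2n-4+k$, spread $\sqrt{\Delta(i)}$ maximal at $i=1$ since $\Delta'(i)=16(k-n+2i)\le 0$) is a clean substitute for differentiating that piece, and your step (ii) for $h(i)=(i-1)(k+i-2)^\alpha+(n-k-i-1)(n-i-2)^\alpha$, if carried out by the derivative-plus-Cauchy-MVT argument ``as in Theorem \ref{thm32}'', is exactly the computation the paper performs, so the deferral is fillable.

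Two cautions. First, your stated justification that $q_1(i)+q_3(i)$ is constant because $m(G(i))$ (hence $\mathrm{tr}\,Q(G(i))$) does not depend on $i$ is false: $m(G(i))=\binom{k}{2}+k(n-k)+\binom{i}{2}+\binom{n-k-i}{2}$ does depend on $i$ (e.g.\ $n-k=8$: $21$ versus $12$ for $i=1$ versus $i=4$). The constancy $q_1(i)+q_3(i)=2n-4+k$ is nevertheless correct, but it must be read off directly from Lemma \ref{lem41}, as you also do. Second, for the same reason your fallback route for step (ii) — a majorization of the full spectra plus Karamata — does not work as stated, since the two spectra have different sums; one would need weak (sub)majorization together with the fact that $x^\alpha$ is increasing and convex on $[0,\infty)$ for $\alpha\ge1$, and verifying those partial-sum inequalities is not obviously easier than the derivative computation. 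With the primary route (sign $h'$ as in the paper) the proposal is sound, including the equality case, since for $\alpha=1$ the pair term is constant but $h$ is strictly decreasing, and for $\alpha>1$ both pieces are strict.
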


\begin{proof}
Let $G^*$ be defined as above.
 Then $G^*=G(i)$ for some $i\in\{1, 2, \ldots, \lfloor\frac{n-k}{2}\rfloor\}$ by  $\alpha\geq1$ and Theorem \ref{thm42}. By Lemma \ref{lem41}, we have

\hskip1cm $S_\alpha(G(i))=k(n-2)^\alpha+(i-1)(k+i-2)^\alpha+(n-k-i-1)(n-i-2)^\alpha$

\hskip3.05cm   $+\left[n-2+\frac{k}{2}+\frac{1}{2}\sqrt{(k-2n)^2+16i(k-n+i)}\right]^\alpha$

\hskip3.05cm  $+\left[n-2+\frac{k}{2}-\frac{1}{2}\sqrt{(k-2n)^2+16i(k-n+i)}\right]^\alpha$

\hskip2.7cm $=k(n-2)^\alpha+(k+i-2)^{\alpha+1}+(n-i-2)^{\alpha+1}$

\hskip3.05cm $-(k-1)[(k+i-2)^\alpha+(n-i-2)^\alpha]$

\hskip3.05cm $+\left[n-2+\frac{k}{2}+\frac{1}{2}\sqrt{(k-2n)^2+16i(k-n+i)}\right]^\alpha$

\hskip3.05cm $+\left[n-2+\frac{k}{2}-\frac{1}{2}\sqrt{(k-2n)^2+16i(k-n+i)}\right]^\alpha$

\noindent Let

\hskip1cm $f(x)=(x+k-2)^{\alpha+1}+(n-2-x)^{\alpha+1}-(k-1)\left[(x+k-2)^\alpha+(n-2-x)^\alpha\right]$

\hskip2.25cm $+\left[n-2+\frac{k}{2}+\frac{1}{2}\sqrt{(k-2n)^2+16x(k-n+x)}\right]^\alpha$

\hskip2.25cm $+\left[n-2+\frac{k}{2}-\frac{1}{2}\sqrt{(k-2n)^2+16x(k-n+x)}\right]^\alpha$

\noindent with $1\leq x\leq\lfloor\frac{n-k}{2}\rfloor.$ Then
\vskip 0.1cm


$f'(x)=\alpha(k-1)[(n-2-x)^{\alpha-1}-(x+k-2)^{\alpha-1}]-(\alpha+1)[(n-2-x)^\alpha-(x+k-2)^\alpha]$

\hskip1.5cm $+\frac{4\alpha[2x-(n-k)]}{\sqrt{(k-2n)^2+16x(k-n+x)}}\cdot\{\left[n-2+\frac{k}{2}+\frac{1}{2}\sqrt{(k-2n)^2+16x(k-n+x)}\right]^{\alpha-1}$

\hskip1.5cm $-\left[n-2+\frac{k}{2}-\frac{1}{2}\sqrt{(k-2n)^2+16x(k-n+x)}\right]^{\alpha-1}\}.$

\vskip 0.1cm

 If $x=\lfloor\frac{n-k}{2}\rfloor=\frac{n-k}{2},$ then $n-x=x+k,$ so $f'(x)=0.$

 Otherwise, $x<\frac{n-k}{2},$ i.e., $n-x>x+k$ and therefore $n-x-2>x+k-2.$ By Cauchy mean-value Theorem, there exits $\xi\in(x+k-2, n-x-2)$ satisfying
$$\frac{(n-2-x)^{\alpha-1}-(x+k-2)^{\alpha-1}}{(n-2-x)^\alpha-(x+k-2)^\alpha}=\frac{(\alpha-1)\xi^{\alpha-2}}{\alpha\xi^{\alpha-1}}=\frac{\alpha-1}{\alpha\xi}.$$

 Thus we have

$f'(x)=[(n-2-x)^\alpha-(x+k-2)^\alpha]\cdot\left[\alpha(k-1)\cdot\frac{(n-2-x)^{\alpha-1}-(x+k-2)^{\alpha-1}}{(n-2-x)^\alpha-(x+k-2)^\alpha}-(\alpha+1)\right]$

\hskip1.5cm $+\frac{4\alpha[2x-(n-k)]}{\sqrt{(k-2n)^2+16x(k-n+x)}}\cdot\{\left[n-2+\frac{k}{2}+\frac{1}{2}\sqrt{(k-2n)^2+16x(k-n+x)}\right]^{\alpha-1}$

\hskip1.5cm $-\left[n-2+\frac{k}{2}-\frac{1}{2}\sqrt{(k-2n)^2+16x(k-n+x)}\right]^{\alpha-1}\}.$

\hskip1.05cm $=[(n-2-x)^\alpha-(x+k-2)^\alpha]\cdot[(\alpha-1)\cdot\frac{k-1}{\xi}-(\alpha+1)]$

\hskip1.5cm $+\frac{4\alpha[2x-(n-k)]}{\sqrt{(k-2n)^2+16x(k-n+x)}}\cdot\{\left[(n-2+\frac{k}{2}+\frac{1}{2}\sqrt{(k-2n)^2+16x(k-n+x)}\right]^{\alpha-1}$

\hskip1.5cm $-\left[n-2+\frac{k}{2}-\frac{1}{2}\sqrt{(k-2n)^2+16x(k-n+x)}\right]^{\alpha-1}\}.$

\vskip 0.1cm
Noting that  $\alpha\geq1,$  $x<\frac{n-k}{2},$ $n-x-2>x+k-2$ and $\xi>x+k-2\geq k-1,$ we have $f'(x)<0,$ that is, $f(x)$ is decreasing for $1\leq x\leq\lfloor\frac{n-k}{2}\rfloor.$

Therefore, $S_\alpha(G(i))=k(n-2)^\alpha+f(i)$ with $1\leq i\leq\lfloor\frac{n-k}{2}\rfloor$ is maximum if and only if $i=1.$ It followings that $G^*=K_k\vee(K_1\cup K_{n-k-1}).$
\end{proof}

Note that $S_1(G)=2m,$ where $m$ is the number of edges in $G.$ From Theorem \ref{thm43}, we have
\begin{cor}\label{cor41}
Let $n,$ $k$ be positive integers with $1\leq k\leq n-1,$ and $G$ be any graph in $\mathcal{V}_n^k$ with $m$ edges. Then
 $m\leq \frac{1}{2}b_1(n,k)=\frac{1}{2}(n^2-3n+2k+2),$
 with equality if and only if $G=K_k\vee(K_1\cup K_{n-k-1}).$
\end{cor}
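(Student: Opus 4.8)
The plan is to specialize Theorem~\ref{thm43} to the case $\alpha=1$ and then simplify the resulting closed form. Since $S_1(G)=2m$ by the identity $S_1(G)=\operatorname{tr}(Q(G))=\sum_i d_i=2m$, Theorem~\ref{thm43} with $\alpha=1$ gives directly $2m=S_1(G)\leq b_1(n,k)$, with equality exactly when $G=K_k\vee(K_1\cup K_{n-k-1})$, because the equality characterization in Theorem~\ref{thm43} is valid for all $\alpha\geq 1$ and in particular for $\alpha=1$. So the only work is to evaluate $b_1(n,k)$ explicitly.

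Next I would substitute $\alpha=1$ into the defining formula for $b_\alpha(n,k)$. The first two terms become $k(n-2)+(n-k-2)(n-3)$. The last two terms are the two roots $q_1(1)+q_3(1)$ of the quadratic factor in Lemma~\ref{lem41} at $i=1$; since they appear with exponent $1$, their sum is simply twice the "non-radical" part, namely $2\bigl(n-2+\tfrac{k}{2}\bigr)=2n-4+k$, the square-root terms cancelling. Therefore
\[
b_1(n,k)=k(n-2)+(n-k-2)(n-3)+(2n-4+k).
\]
Expanding, $k(n-2)+2n-4+k = kn-2k+2n-4+k = kn-k+2n-4$, and $(n-k-2)(n-3)=n^2-3n-kn+3k-2n+6=n^2-5n-kn+3k+6$. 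Adding these gives $b_1(n,k)=n^2-3n+2k+2$, hence $m\leq\tfrac12 b_1(n,k)=\tfrac12(n^2-3n+2k+2)$, which is exactly the claimed bound.

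For completeness I would also verify the equality case concretely: for $G=K_k\vee(K_1\cup K_{n-k-1})$ one counts the edges directly as $\binom{k}{2}$ (inside the $K_k$), plus $k\cdot 1 + k(n-k-1)=k(n-k)$ (the join edges), plus $\binom{n-k-1}{2}$ (inside $K_{n-k-1}$), and checks that $2$ times this sum equals $n^2-3n+2k+2$; this matches and confirms sharpness. There is essentially no obstacle here—the corollary is a routine evaluation—so the only point requiring any care is the cancellation of the radicals when summing $q_1(1)$ and $q_3(1)$, which is immediate from the fact that they are conjugate roots of the same quadratic and the exponent is~$1$.
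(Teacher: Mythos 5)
Your proposal is correct and follows the same route as the paper: the corollary is obtained by setting $\alpha=1$ in Theorem \ref{thm43}, using $S_1(G)=2m$, and simplifying $b_1(n,k)$ (the radicals cancelling since $q_1(1)+q_3(1)=2n-4+k$). Your explicit algebra confirming $b_1(n,k)=n^2-3n+2k+2$ and the direct edge count for the extremal graph are just the details the paper leaves implicit.
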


 Note that $E_L(G)=tr(L(G)^2)=tr[(D(G)-A(G))^2],$ and $S_2(G)=tr(Q(G)^2)=tr[(D(G)+A(G))^2].$ Since $tr[D(G)A(G)]=0,$ $tr[(D(G)+A(G))^2]=tr[(D(G)-A(G))^2],$ which implies that $E_L(G)=S_2(G).$ So we can obtain the bound of $E_L(G)$ in $\mathcal{V}_n^k$ as follows.

\begin{cor}\label{cor42}
Let $n,$ $k$ be positive integers with $1\leq k\leq n-1,$ and $G\in \mathcal{V}_n^k.$ Then

\hskip2cm $E_L(G)\leq b_2(n,k)=n^3+2n^2+(2k+5)n+k^2-k-2,$
\vskip0.1cm
\noindent with equality if and only if $G=K_k\vee(K_1\cup K_{n-k-1}).$
\end{cor}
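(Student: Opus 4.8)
The plan is to reduce the statement to Theorem~\ref{thm43} evaluated at $\alpha=2$. The identity $E_L(G)=S_2(G)$ has just been recorded: since $tr[D(G)A(G)]=0$ we have $tr[(D(G)-A(G))^2]=tr[(D(G)+A(G))^2]$, i.e.\ $\sum_{i=1}^n\mu_i^2=\sum_{i=1}^n q_i^2$, and because any zero signless Laplacian eigenvalues contribute nothing to a quadratic sum, $\sum_{i=1}^n q_i^2=\sum_{i=1}^h q_i^2=S_2(G)$. Since $2\geq 1$, Theorem~\ref{thm43} applies with $\alpha=2$ and gives $S_2(G)\leq b_2(n,k)$ for every $G\in\mathcal V_n^k$, with equality if and only if $G=K_k\vee(K_1\cup K_{n-k-1})$. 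Combining the two yields $E_L(G)\leq b_2(n,k)$ with the same extremal graph, so the only remaining task is to put $b_2(n,k)$ into closed polynomial form.

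For that I would substitute $\alpha=2$ into the expression for $b_\alpha(n,k)$ from Theorem~\ref{thm43}. Writing $a=n-2+\tfrac{k}{2}$ and $b=\tfrac12\sqrt{(k-2n)^2+16(k-n+1)}$, the two bracketed terms are $(a+b)^2$ and $(a-b)^2$, whose sum is $2a^2+2b^2$; this single identity removes the square root and is the only structural step. Then $2a^2+2b^2=2\bigl(n-2+\tfrac{k}{2}\bigr)^2+\tfrac12(k-2n)^2+8(k-n+1)$, which together with $k(n-2)^2$ and $(n-k-2)(n-3)^2$ is expanded and collected by powers of $n$ and $k$ to produce $b_2(n,k)$. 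In the collection the $kn^2$ contributions coming from $k(n-2)^2$ and from $(n-k-2)(n-3)^2$ cancel against each other.

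There is no genuine obstacle here: once $E_L=S_2$ is in hand, the inequality and the equality case are immediate from Theorem~\ref{thm43}, so the proof is a one-line reduction plus a polynomial expansion. The only thing to be careful about is the arithmetic in that expansion; a convenient sanity check is the boundary case $k=n-1$, where $G(1)=K_n$ and $\sigma(Q(K_n))=\{2n-2,(n-2)^{[n-1]}\}$, so $b_2(n,n-1)$ must equal $(2n-2)^2+(n-1)(n-2)^2=n^2(n-1)$; matching this against the closed form pins down the coefficients.
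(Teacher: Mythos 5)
Your route is exactly the paper's: the corollary appears immediately after the observation $E_L(G)=S_2(G)$ (via $tr[D(G)A(G)]=0$) and is obtained by specializing Theorem \ref{thm43} to $\alpha=2$, so your reduction, the inequality, and the equality characterization are all in order. One substantive remark, though: if you carry out the expansion you describe, you do not recover the polynomial printed in the corollary. With $a=n-2+\frac{k}{2}$ and $b=\frac{1}{2}\sqrt{(k-2n)^2+16(k-n+1)}$ one gets $2a^2+2b^2=4n^2-16n+16+k^2+4k$, while $k(n-2)^2+(n-k-2)(n-3)^2=n^3-8n^2+21n-18+2kn-5k$, so the total is $b_2(n,k)=n^3-4n^2+(2k+5)n+k^2-k-2$; the coefficient $+2n^2$ in the displayed statement is an arithmetic slip in the paper, not in your plan. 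Your own sanity check already detects this: at $k=n-1$ the value must be $(2n-2)^2+(n-1)(n-2)^2=n^2(n-1)$, which matches $n^3-4n^2+(2k+5)n+k^2-k-2$ but not the printed formula (the latter gives $n^3+5n^2$); likewise $n=4$, $k=1$ gives $S_2\bigl(K_1\vee(K_1\cup K_2)\bigr)=\sum_i d_i^2+2m=26$, again agreeing with the $-4n^2$ version. So, executed faithfully, your argument proves the corollary with the corrected closed form $b_2(n,k)=n^3-4n^2+(2k+5)n+k^2-k-2$, and the claim that the expansion ``produces'' the printed $n^3+2n^2+(2k+5)n+k^2-k-2$ should be amended accordingly.
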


At this point, we recall that the edge connectivity of $G,$ denoted by $\varepsilon(G),$ is the minimum number of edges whose deletion disconnects $G.$
Let $\varepsilon_n^k=\{G\in\mathcal{F}_n|\varepsilon(G)\leq k\}.$

\begin{cor}\label{cor43}
Let $n,$ $k$ be positive integers with $1\leq k\leq n-1,$ $G$ be any graph in $\varepsilon_n^k$ and $\alpha\geq1.$ Then
$S_\alpha(G)\leq b_\alpha(n,k),$
 with equality  if and only if $G=K_k\vee(K_1\cup K_{n-k-1}).$
\end{cor}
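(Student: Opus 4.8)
The plan is to deduce this corollary from Theorem~\ref{thm43} by exhibiting the containment $\varepsilon_n^k\subseteq\mathcal{V}_n^k$. The one classical ingredient I would invoke is Whitney's inequality relating the three connectivity parameters of a graph: $\kappa(G)\leq\varepsilon(G)\leq\delta(G)$, where $\delta(G)$ is the minimum degree. (If one prefers to avoid citing it, the relevant half $\kappa(G)\leq\varepsilon(G)$ has a short self-contained argument: given a minimum edge cut, deleting one endpoint of each edge of the cut disconnects $G$ or leaves a single vertex, so $\kappa(G)\leq\varepsilon(G)$ once $G$ is not complete; and for $G=K_n$ both sides equal $n-1$.)

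First I would record the inclusion. If $G\in\varepsilon_n^k$, then $\kappa(G)\leq\varepsilon(G)\leq k$, so $G\in\mathcal{V}_n^k$; hence $\varepsilon_n^k\subseteq\mathcal{V}_n^k$. Applying Theorem~\ref{thm43} to $G$ (which is legitimate precisely because $G\in\mathcal{V}_n^k$ and $\alpha\geq1$) yields $S_\alpha(G)\leq b_\alpha(n,k)$, which is the desired bound.

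Second I would treat the equality case. By Theorem~\ref{thm43}, among graphs in $\mathcal{V}_n^k$ the bound $b_\alpha(n,k)$ is attained only by $H:=K_k\vee(K_1\cup K_{n-k-1})$. Since $\varepsilon_n^k\subseteq\mathcal{V}_n^k$, a graph $G\in\varepsilon_n^k$ can attain the bound only if $G=H$, and it remains to check that $H$ itself lies in $\varepsilon_n^k$. For $1\leq k\leq n-2$ the vertex of the $K_1$ summand has degree exactly $k$ in $H$ (it is joined only to the $k$ vertices of $K_k$), so $\delta(H)=k$ and therefore $\varepsilon(H)\leq\delta(H)=k$; for $k=n-1$ we have $H=K_n$ and $\varepsilon(K_n)=n-1=k$. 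In either case $H\in\varepsilon_n^k$, so the bound is attained there, and equality in the corollary holds if and only if $G=K_k\vee(K_1\cup K_{n-k-1})$.

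There is essentially no analytic obstacle here; the only points requiring care are the appeal to $\kappa\leq\varepsilon$ (or a one-line proof of it) and the verification that the extremal graph of Theorem~\ref{thm43} actually belongs to the smaller family $\varepsilon_n^k$, including the degenerate case $k=n-1$ where $H$ collapses to $K_n$. Once the inclusion $\varepsilon_n^k\subseteq\mathcal{V}_n^k$ is in hand, both the inequality and the equality characterization are immediate consequences of Theorem~\ref{thm43}.
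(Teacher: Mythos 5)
Your proposal is correct and follows essentially the same route as the paper: use $\kappa(G)\leq\varepsilon(G)$ to get $\varepsilon_n^k\subseteq\mathcal{V}_n^k$, apply Theorem~\ref{thm43}, and note that $K_k\vee(K_1\cup K_{n-k-1})$ itself lies in $\varepsilon_n^k$. Your write-up merely supplies the details (the minimum-degree computation and the $k=n-1$ case) that the paper leaves implicit.
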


\begin{proof}
Since $\kappa(G)\leq\varepsilon(G),$ it follows $\varepsilon_n^k\subseteq\mathcal{V}_n^k.$ Let $G\in\varepsilon_n^k,$ the corollary follows from the fact $K_k\vee(K_1\cup K_{n-k-1})\in\varepsilon_n^k$.
\end{proof}

In \cite{2013}, the authors proved that $IE(G)\leq b_{\frac{1}{2}}(n,k)$ for any graph $G$ in $\mathcal{V}_n^k,$ and the equality holds if and only if $G=K_k\vee(K_1\cup K_{n-k-1}).$ Note that $IE(G)=S_{\frac{1}{2}}(G).$ From the above facts, Theorem \ref{thm21} and Theorem \ref{thm42}, we obtain the following conjecture.

\begin{con}\label{con44}
Let $n,$ $k$ be positive integers with $1\leq k\leq n-1,$ $G\in \mathcal{V}_n^k$ and $\alpha<1.$ Then we have

{\rm (1)} If $0<\alpha<1,$ then
$S_\alpha(G)\leq b_\alpha(n,k)$ with equality if and only if $G=K_k\vee(K_1\cup K_{n-k-1}).$





{\rm (2)} If $\alpha<0,$ then $S_\alpha(G)\geq b_\alpha(n,k)$ with equality if and only if $G=K_k\vee(K_1\cup K_{n-k-1}).$
\end{con}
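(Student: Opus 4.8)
The plan is to mirror the reduction-plus-calculus scheme of Theorem \ref{thm43} and push it into the two remaining ranges of $\alpha$. First I would invoke Theorem \ref{thm42}: for $0<\alpha<1$ part (1) puts the maximizer $G^*$ in $\{G(1),\dots,G(\lfloor\frac{n-k}{2}\rfloor)\}$, and for $\alpha<0$ part (2) puts the minimizer $G_*$ in the same family, so in either range it suffices to optimize over the one-parameter family $G(i)$. Using Lemma \ref{lem41} exactly as in Theorem \ref{thm43}, I would write $S_\alpha(G(i))=k(n-2)^\alpha+f(i)$ with the same $f(x)$ and the same split $f'(x)=T_1(x)+T_2(x)$, where $T_1$ carries the ``bulk'' eigenvalues $x+k-2$ and $n-2-x$ and $T_2$ carries the two ``spectral-edge'' eigenvalues $q_1(x),q_3(x)$. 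The target is then: $f$ decreasing on $[1,\lfloor\frac{n-k}{2}\rfloor]$ for part (1) (maximum at $i=1$) and $f$ increasing there for part (2) (minimum at $i=1$).

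For part (1) I would reuse the Cauchy-mean-value rewriting of $T_1$ verbatim. With $0<\alpha<1$ one checks $T_1(x)<0$, the ``right'' sign. The trouble is $T_2$: since now $\alpha-1<0$ one has $q_1^{\alpha-1}<q_3^{\alpha-1}$, so the factor $q_1^{\alpha-1}-q_3^{\alpha-1}$ flips sign relative to the case $\alpha\ge1$ and $T_2(x)>0$. Thus the two terms compete and the clean ``both terms negative'' conclusion of Theorem \ref{thm43} is unavailable; the main obstacle is the quantitative domination $|T_1|>|T_2|$ across the whole interval. I would try to bound $T_2$ using the two exact identities $q_1+q_3=2n-4+k$ and $q_1q_3=2(k-2)(n-1)+4x(n-k)-4x^2$, which pin down $q_1,q_3$ and $\sqrt D=q_1-q_3$ in terms of $n,k,x$, and compare against the lower bound on $|T_1|$ coming from $x+k-2\le\xi\le n-2-x$. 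Establishing this for all admissible $n,k,x$ is where the real work lies, and it looks plausible: every small case I would test (e.g. $n=5,k=1$ and $n=6,k=2$ at $\alpha=\tfrac12$) keeps $G(1)$ as the maximizer.

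For part (2) the same scheme does not close, and the analysis in fact indicates the statement is false as worded. For $\alpha<0$ one gets $T_2(x)<0$, while the sign of $T_1(x)$ is genuinely ambiguous: the bracket $(\alpha-1)\frac{k-1}{\xi}-(\alpha+1)$ changes sign near $\alpha=-1$, so no fixed monotonicity of $f$ can be extracted. More decisively, a limiting argument shows the conjectured lower bound must break for $\alpha$ sufficiently negative. The radicand $D(i)=(k-2n)^2+16i(k-n+i)$ is an upward-opening parabola in $i$ with vertex at $i=\frac{n-k}{2}$, hence over the range $1\le i\le\lfloor\frac{n-k}{2}\rfloor$ it is maximal at $i=1$; so $q_3(1)=n-2+\frac k2-\frac12\sqrt{D(1)}$ is the smallest of all the numbers $q_3(i)$, and in each $G(i)$ it is readily checked to be the least eigenvalue. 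Therefore as $\alpha\to-\infty$ the sum $S_\alpha(G(i))$ is dominated by $q_3(i)^\alpha$, making $S_\alpha(G(1))$ the \emph{largest}, i.e. $G(1)$ is the maximizer rather than the minimizer. Concretely, from Lemma \ref{lem41} one finds $S_\alpha(G(2))<S_\alpha(G(1))=b_\alpha(n,k)$ at $n=5,k=1,\alpha=-2$, and again at $n=6,k=2,\alpha=-5$, each violating $(4.1)$ read with ``$\ge$''. The honest target in the negative range is thus not all $\alpha<0$ but a subinterval $\alpha_0(n,k)\le\alpha<0$ on which $G(1)$ really is the minimizer (the computations suggest $\alpha_0$ is near $-1$ for $k=1$ and more negative for larger $k$). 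Determining this threshold—equivalently, showing $f'>0$ exactly on that subinterval and locating where the blow-up of $q_3(1)^\alpha$ overtakes the gain from the denser bulk spectrum of $G(1)$—is the main obstacle, and it shows that Conjecture \ref{con44}(2) must be restricted before it can be proved.
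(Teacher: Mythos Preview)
The statement under review is Conjecture~\ref{con44}: the paper poses it as an open problem and offers no proof, so there is nothing in the paper to compare your argument against. Your proposal is therefore not a rederivation but an attempt to settle the conjecture, and on that score it does more than the paper: your analysis of part~(2) is correct and in fact \emph{disproves} the conjecture as stated. I checked your first counterexample explicitly. For $n=5$, $k=1$ one has $G(1)=K_1\vee(K_1\cup K_3)$ with signless Laplacian spectrum $\{q_1(1),3,q_3(1),2,2\}$ where $q_1(1)+q_3(1)=7$ and $q_1(1)q_3(1)=4$, giving $S_{-2}(G(1))=\tfrac{41}{16}+\tfrac{1}{9}+\tfrac{1}{2}=\tfrac{457}{144}$; and $G(2)=K_1\vee(K_2\cup K_2)$ has spectrum $\{q_1(2),3,q_3(2),1,1\}$ with $q_1(2)+q_3(2)=7$, $q_1(2)q_3(2)=8$, giving $S_{-2}(G(2))=\tfrac{33}{64}+\tfrac{1}{9}+2=\tfrac{1513}{576}$. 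Since $\tfrac{1513}{576}<\tfrac{1828}{576}=\tfrac{457}{144}$, we get $S_{-2}(G(2))<b_{-2}(5,1)$ with $G(2)\in\mathcal V_5^1$, contradicting (2). One small inaccuracy in your heuristic: it is not true that $q_3(i)$ is always the least eigenvalue of $G(i)$ (for $G(2)$ here the minimum is the bulk eigenvalue $k+i-2=1<q_3(2)\approx1.44$); the limiting argument still goes through because the minimum eigenvalue of $G(1)$ is strictly smaller than that of $G(2)$, but the justification should be phrased in terms of the overall minimum eigenvalue rather than $q_3$ alone.

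For part~(1) you have correctly isolated the obstruction: with $0<\alpha<1$ the two pieces $T_1$ and $T_2$ of $f'$ carry opposite signs, so the sign-only argument of Theorem~\ref{thm43} cannot close the case, and a quantitative comparison $|T_1|>|T_2|$ is required. You have not supplied that comparison, so part~(1) remains open in your write-up just as it does in the paper; your outline is an honest statement of where the difficulty lies rather than a proof.
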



\end{document}